\DeclareMathOperator{\Gal}{Gal} 
\DeclareMathOperator{\N}{\mathsf{N}} 
\newcommand{\Z}{\mathbb{Z}}
\newcommand{\cyc}{\operatorname{cyc}}
\newcommand{\ac}{\operatorname{ac}}
\newcommand{\Aut}{\operatorname{Aut}}
\newtheorem{theorem}{Theorem}[section]
\newtheorem{proposition}[theorem]{Proposition}
\newtheorem{lemma}[theorem]{Lemma}
\theoremstyle{remark}
\newtheorem{remark}[theorem]{Remark}
\newtheorem{Example}[theorem]{Example}
\begin{document}
\begin{abstract} 
Let $K$ be an imaginary quadratic field in which the odd prime $p$ does not split.
When the $p$-part of the class group of $K$ is cyclic, we describe the possible structures
for the $p$-part of the class group of the first level of the cyclotomic $\mathbb Z_p$-extension of $K$.
This allows us to show the compatibility of the heuristics of Cohen--Lenstra--Martinet for class groups with the heuristics of Ellenberg--Jain--Venkatesh for how often the cyclotomic Iwasawa invariant $\lambda$ equals 1.
\end{abstract}

\title{The first level of $\mathbb Z_p$-extensions and compatibility of heuristics}

\author[Kundu]{Debanjana Kundu}
\address{University of Texas Rio Grande Valley, Edinburg, TX, 78539, USA}\email{dkundu@math.toronto.edu}
\author[Washington]{Lawrence C. Washington}
\address{Dept. of Mathematics, Univ. of Maryland, College Park, MD, 20742 USA}\email{lcw@umd.edu}
\keywords{Iwasawa theory,  class groups, Cohen--Lenstra--Martinet heuristics}
\subjclass[2020] {Primary: 11R23,  11R33, 11R29 Secondary: 11R37}

\maketitle

\section{Introduction}

\subsection{Background}
Let $K=K_0$ be an imaginary quadratic field and fix an odd prime $p$ that does not split in $K/\mathbb Q$.
Let $K_{\cyc}/K$ be the cyclotomic $\mathbb Z_p$-extension of $K$, write $A_n$ to denote the Sylow $p$-subgroup of the class group of $K_n$, and let $p^{e_n}$ be the order of $A_n$.
A classical theorem of K.~Iwasawa, supplemented by a result of B.~Ferrero and the second author (see \cite{FW79}), says that there exist integers $\lambda\ge 0$ and $\nu$, independent of $n$, such that for all sufficiently large $n$, 
\[
e_n=\lambda n+\nu.
\]
There are two natural questions to ask: 
\begin{enumerate}
    \item[(I)] What is the behavior of the $p$-part of the class group in the initial layers?
    \item[(II)] What is the distribution of the $\lambda$-invariant as $K$ varies?
\end{enumerate}
Partial results in the direction of (I) appear in \cite{G, San91}. More recently, J.~Ellenberg--S.~Jain--A.~Venkatesh made progress towards (II).
In \cite{EJV}, they used a random matrix model to predict the probability that $\lambda$ takes on a given value as $K$ varies.
For a precise statement of their heuristics, see Section~\ref{alternative}.

To get a complete answer for (I) it would be useful to get a holistic picture of the possible group structures of $A_1, A_2, \ldots$ for given $A_0$. 
Once the base field is fixed (alternatively once the class group of $K$ is fixed to have a certain structure), one expects that there are some restrictions on the possible group structures for the class group of the first layer of the cyclotomic $\Z_p$-extension.
This is the first question we address in this article.

\smallskip 

\textbf{Question 1a.} Given $A_0$, what are the possible structures of $A_1, A_2, \ldots$?

\smallskip

Answering this question in full generality is a difficult one,
so we restrict ourselves to the situation that there is a unique prime in $K$ above $p$ that is totally ramified in $K_{\cyc}/K$.
In this setting, Iwasawa proved that $\lambda=0$ if and only if $A_0=0$; see \cite{Iwa56}.
Then again, work of R.~Gold \cite{G} says that $\lambda=1$ if and only if $A_0$ and $A_1$ are cyclic of order greater than 1 (see Proposition~\ref{7.1}).
These previous results indicate that in our situation of interest, we can obtain information on the Iwasawa invariants from the base field and the first layer of the cyclotomic tower. 
The group $A_1$ is not only an abelian group, but also a module over the $p$-adic group ring $\mathbb Z_p[\Gal(K_1/K_0)]$.

In this paper, we restrict ourselves to the case that $A_0$ is cyclic.
We then completely determine the possible structures of $A_1$, which leads to the following natural question:

\smallskip 

\textbf{Question 1b.} How often do the different possibilities of $A_1$ arise?

\smallskip

H.~Cohen and H.~Lenstra \cite{CL} put forth heuristics that predict that the probability $A_0$ is a given abelian $p$-group is inversely proportional to the size of the automorphism group of this group; see also Section~\ref{alternative}.
These heuristics were later modified in \cite{CM} by H.~Cohen--J.~Martinet.
The philosophy of Cohen--Lenstra--Martinet is that the frequencies of various possibilities for $A_1$ should be inversely proportional to the size of their automorphism groups as modules over this group ring; see also \cite{Gr}.
This raises the following question which we also investigate using our answer(s) to the previous two questions

\smallskip

\textbf{Question 2.}
Are the Ellenberg--Jain--Venkatesh heuristics compatible with Cohen--Lenstra--Martinet heuristics for $A_0$ and $A_1$?

\smallskip

In Section~\ref{compatibility}, we see this is the case for $\lambda=1$.

\subsection{Our results and organization}
In this article, we consider the case where $A_0$ is cyclic\footnote{We will say cyclic to mean cyclic \emph{and} non-trivial}.
In Sections~\ref{basic}, \ref{prelims},  and \ref{reiner}, we give, in a more general setting, the list of the possibilities for $A_1$, given that $A_0$ is cyclic of order $p^m$.
The answers are first given in terms of I.~Reiner's classification of ideals of the $p$-adic group ring $\mathbb Z_p[\Gal(K_1/K_0)]$; see \cite{R}. 
In Section~\ref{groups}, we translate the results given by Reiner's classification into the standard form for abelian $p$-groups.
In Section~\ref{compatibility}, we return to $\mathbb Z_p$-extensions and show that the Cohen--Lenstra--Martinet heuristics yield the prediction
of Ellenberg--Jain--Venkatesh for how often $\lambda=1$.
In Section~\ref{alternative}, we present an alternative heuristic for how often $\lambda$ takes on a given value.
This more na{\"i}ve heuristic agrees with the Ellenberg--Jain--Venkatesh heuristic for $\lambda=0$ and $\lambda=1$, and differs slightly for larger values of $\lambda$; this difference is so slight that it is difficult to differentiate the two predictions via computations.
Extending our results to study $\lambda=2$ via Cohen--Lenstra--Martinet heuristics would distinguish between the two possible heuristics, but this might be a much more technical task.
In Section~\ref{data}, we give data on the occurrence of various possibilities for $A_1$ in order to support the use of the Cohen--Lenstra--Martinet philosophy.
In Section~\ref{comparison sec} we compare the cyclotomic situation with the anti-cyclotomic one.
Finally, in Section~\ref{ff} we mention briefly the situation in function fields.

\section*{Acknowledgements}
We thank Brin Mathematics Research Center for hosting the conference Vistas in Number Theory in June 2024 and the Fields Institute for hosting the Canadian Number Theory Association Conference in June 2024, and thereby providing productive working environments -- much progress towards this article was made during those two weeks.
We thank Jeff Achter, Bryden Cais, and Rene Schoof for helpful discussions.

\section{The main theorem}
\label{basic}

Although our main application is to cyclotomic $\mathbb Z_p$-extensions, we state our main theorem in a more general setting.
For example, we do not assume that the unique ramifying prime lies above $p$, even though this is what happens for $\mathbb Z_p$-extensions.
\begin{theorem}
\label{main theorem}
Let $p$ be an odd prime and let $K$ be an imaginary quadratic field.
Suppose that the following hypotheses hold:
\begin{enumerate}
\item[(A)] $K_1/K$ is a Galois extension of degree $p$ in which only one prime ramifies and this prime is totally ramified.
\item[(B)] the natural map $A_0\to A_1$ is injective, where $A_0$ (resp. $A_1$) denotes the $p$-part of the ideal class group  of $K$ (resp. $K_1$).
\item[(C)] $A_0$ is cyclic of order $p^m$ with $m\ge 1$.
\end{enumerate}
Then $A_1$ is one of the following groups:
\begin{gather*}
\left(\mathbb Z/p^{m}\mathbb Z\right) ^p,
\end{gather*}
\begin{gather*}
\left(\mathbb Z/p^{m-1}\mathbb Z\right) \times \left(\mathbb Z/p^{s+1}\mathbb Z\right)^{a}\times 
\left(\mathbb Z/p^s\mathbb Z\right)^{p-1-a} \\
\text{ with }  m\le s \text{ and } 1\le a\le p-1,
\end{gather*}
\begin{gather*}
\left(\mathbb Z/p^{m+1}\mathbb Z\right) \times \left(\mathbb Z/p^{s+1}\mathbb Z\right)^{b}\times 
\left(\mathbb Z/p^s\mathbb Z\right)^{p-1-b} \\
 \text{ with } 0\le s < m \text{ and } 0\le b\le p-2, \text{ and with } b\ne p-2 \text{ if } m=s+1.
\end{gather*}

\end{theorem}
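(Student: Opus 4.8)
The plan is to study $A_1$ as a module over the group ring $R=\mathbb Z_p[G]$, where $G=\Gal(K_1/K)=\langle\sigma\rangle\cong\mathbb Z/p\mathbb Z$, and to reduce the determination of its abelian group structure to a classification of ideals of $R$. Write $\omega=\sigma-1$. Since $G$ is a $p$-group, $R$ is a Noetherian complete local ring with maximal ideal $\mathfrak m=(p,\omega)$ and residue field $\mathbb F_p$; moreover the factorization $x^p-1=(x-1)\Phi_p(x)$ exhibits $R$ as the fiber product $\mathbb Z_p\times_{\mathbb F_p}\mathcal O$, where $\mathcal O=\mathbb Z_p[\zeta_p]$ and both factors surject onto $\mathbb F_p$. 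I will use this pullback description to read off group structures at the end.

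First I would pin down the invariants and coinvariants of $A_1$. Hypothesis (A) says exactly one prime ramifies in $K_1/K$ and it is totally ramified, so the ambiguous class number formula (Chevalley's formula) applies; because $K$ is imaginary quadratic and $p$ is odd, the unit contribution is prime to $p$ and the only ramification factor is $p$, which cancels $[K_1:K]=p$. Hence $|A_1^G|=|A_0|=p^m$. For any finite $R$-module the four-term sequence $0\to A_1^G\to A_1\xrightarrow{\omega}A_1\to(A_1)_G\to0$ forces $|(A_1)_G|=|A_1^G|=p^m$. The norm map $N\colon A_1\to A_0$ kills $\omega A_1$, and it is surjective since $K_1/K$ is ramified, whence $K_1\cap H_K=K$ (with $H_K$ the Hilbert class field) and $N\,\mathrm{Cl}(K_1)=\mathrm{Cl}(K)$. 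Comparing orders, the induced map $(A_1)_G\to A_0$ is an isomorphism, so $(A_1)_G\cong\mathbb Z/p^m\mathbb Z$ is cyclic. By Nakayama's lemma over the local ring $R$, $A_1$ is therefore a \emph{cyclic} $R$-module, $A_1\cong R/J$ for some finite-index ideal $J$. Finally, hypothesis (B) gives an injection $A_0\hookrightarrow A_1^G$ of groups of equal order $p^m$, so $A_1^G\cong A_0\cong\mathbb Z/p^m\mathbb Z$ is cyclic as well; this is the extra constraint that trims the list.

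Next I would invoke Reiner's classification \cite{R} of the ideals of $R=\mathbb Z_p[G]$, carried out in Section~\ref{reiner}, to enumerate the possible $J$ up to isomorphism of $R/J$. Using the fiber product $R=\mathbb Z_p\times_{\mathbb F_p}\mathcal O$, each cyclic quotient $R/J$ is assembled from a quotient of $\mathbb Z_p$ (the part on which $G$ acts trivially, contributing a single cyclic factor $\mathbb Z/p^c\mathbb Z$) and a quotient $\mathcal O/\lambda^t$ with $\lambda=\zeta_p-1$, glued along $\mathbb F_p$. Writing $t=s(p-1)+a$ one has $\mathcal O/\lambda^t\cong(\mathbb Z/p^{s+1}\mathbb Z)^a\times(\mathbb Z/p^s\mathbb Z)^{p-1-a}$, exactly the shape in the second and third displayed families; the degenerate case $J=p^mR$ yields $R/p^mR\cong(\mathbb Z/p^m\mathbb Z)^p$, the first family. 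Imposing the two constraints found above, namely $(R/J)_G\cong\mathbb Z/p^m\mathbb Z$ and $(R/J)^G\cong\mathbb Z/p^m\mathbb Z$, selects the admissible $(s,a)$ and $(s,b)$ together with the position of the trivial factor ($\mathbb Z/p^{m-1}\mathbb Z$, $\mathbb Z/p^m\mathbb Z$, or $\mathbb Z/p^{m+1}\mathbb Z$), determined by how the $\mathbb Z_p$-component glues to $\mathcal O/\lambda^t$.

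The main obstacle is this last, bookkeeping-heavy step, carried out in Sections~\ref{prelims}--\ref{groups}: translating each Reiner ideal type into its abelian group structure and verifying exactly which numerical data survive the two constraints. The delicate points are the inequalities separating the regimes $m\le s$ and $0\le s<m$ (recording whether the $\mathbb Z_p$-factor sits below or above the $\lambda$-adic level of the $\mathcal O$-part), the precise ranges of $a$ and $b$, and in particular the exceptional exclusion $b\ne p-2$ when $m=s+1$, which reflects the single gluing over $\mathbb F_p$ that would otherwise force $A_1^G$ to be non-cyclic, contradicting (B). Verifying that every surviving case is genuinely realized by some admissible $J$, and that no others occur, is where the real work lies; the computations of $A_1^G$ and $(A_1)_G$ and the reduction to cyclic $R$-modules are what make it tractable.
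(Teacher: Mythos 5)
Your proposal follows essentially the same route as the paper: Chevalley's ambiguous class number formula together with norm surjectivity and Nakayama's lemma to show $A_1$ is a cyclic $\mathbb Z_p[\sigma]$-module (Proposition~\ref{prop1}), Reiner's fiber-product description of the finite-index ideals of $\mathbb Z_p[\sigma]$ (Section~\ref{reiner}), the constraint $A_1^{\langle\sigma\rangle}=\N(A_1)\simeq\mathbb Z/p^m\mathbb Z$ to cut down the admissible ideals (Proposition~\ref{prop 5.2}), and then a case-by-case translation into abelian group structures. The reductions you carry out are correct, and you rightly identify the remaining translation step as where the real work lies.

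Two cautions about that deferred step. First, the one concrete claim you make about it is wrong: the exclusion $b\ne p-2$ when $m=s+1$ is \emph{not} forced by hypothesis (B) or by cyclicity of $A_1^{\langle\sigma\rangle}$. The group $\left(\mathbb Z/p^{m+1}\mathbb Z\right)\times\left(\mathbb Z/p^{m}\mathbb Z\right)^{p-2}\times\left(\mathbb Z/p^{m-1}\mathbb Z\right)$ that this parametrization would produce does occur (it arises from the ideals with $r=(p-1)m$, i.e.\ $s=m$, $t=0$, $j\not\equiv(-1)^m\pmod p$); it is simply already listed in the second family as the case $s=m$, $a=1$, and the condition $b\ne p-2$ is imposed only to avoid listing the same group twice. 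Second, the gluing over $\mathbb F_p$ does more than append a single cyclic factor to $\mathcal O/\lambda^{t}\simeq(\mathbb Z/p^{s+1}\mathbb Z)^{t}\times(\mathbb Z/p^{s}\mathbb Z)^{p-1-t}$: for instance, when $m\le s$ the answer is $\left(\mathbb Z/p^{m-1}\mathbb Z\right)\times\left(\mathbb Z/p^{s+1}\mathbb Z\right)^{t+1}\times\left(\mathbb Z/p^{s}\mathbb Z\right)^{p-t-2}$, in which one $\mathbb Z/p^{s}\mathbb Z$ factor has been promoted and the $\mathbb Z_p$-component contributes $p^{m-1}$ rather than $p^{m}$. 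So the abelian group structure cannot be read off from the two fiber-product components separately; this is precisely why Section~\ref{groups} resorts to explicit Smith normal form computations, and any complete write-up along your lines would have to do the same.
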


The case $s=0, \ b=0$ gives the possibility that $A_1$ is cyclic of order $p^{m+1}$. This is the case that corresponds to $\lambda = 1$ when we discuss $\mathbb Z_p$-extensions in Section \ref{compatibility}. The proof of the theorem, or a straightforward calculation, shows that when $A_0$ is cyclic of a fixed order $p^m$, then the structure of $A_1$ is uniquely determined by $\abs{A_1}$, except when $\abs{A_1}=p^{mp}$.
In this case, there are two possibilities: $\left(\mathbb Z/p^{m}\mathbb Z\right) ^p$ and $\left(\mathbb Z/p^{m-1}\mathbb Z\right) \times \left(\mathbb Z/p^{m+1}\mathbb Z\right) \times  \left(\mathbb Z/p^m\mathbb Z\right)^{p-2}$.

\begin{remark}
The restriction that $b\ne p-2$ when $m=s+1$ is not necessary since the resulting group is included in the case $m=s$, $a=1$.
However, we include this condition to remove duplication from the list.
\end{remark}

The proof of the theorem occupies the next few sections.

\section{Preliminaries on class groups}
\label{prelims}

With notation introduced so far, let $\sigma$ generate $\Gal(K_1/K)$, so $\sigma^p=1$.
We will often use the norm element
\[
\N=1+\sigma+\sigma^2+\cdots +\sigma^{p-1}.
\]
$A_1$ is a module for the local ring $\mathbb Z_p[\sigma]$.
We identify $A_0$ with its image in $A_1$. 
This is possible because we have assumed that $A_0$ injects into $A_1$.
Note that $A_0$ is fixed by $\sigma$.
In the proof of the next result, we crucially use several hypotheses made in Theorem~\ref{main theorem}.

\begin{proposition}
\label{prop1}
Let the notations be as in Theorem~\ref{main theorem}.
The following assertions hold.
\begin{enumerate}
\item[(a)] The norm map $\N: A_1\to A_0$ is surjective.
\item[(b)] $A_0=\{a\in A_1\, | \, \sigma a = a\} =  \operatorname{Image}(\N)$.
\item[(c)] There is an ideal $I$ of $\mathbb Z_p[\sigma]$ such that $A_1\simeq \mathbb Z_p[\sigma]/I$.
\end{enumerate}
\end{proposition}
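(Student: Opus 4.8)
The three assertions are naturally proved in the order (a), (b), (c), with the first two feeding into the third. Throughout I write $G=\Gal(K_1/K)=\langle\sigma\rangle$ and let $N_{K_1/K}$ denote the arithmetic norm on ideals, reserving $\N=1+\sigma+\cdots+\sigma^{p-1}$ for the corresponding element of $\mathbb Z_p[\sigma]$ acting on $A_1$. The two are linked by the ideal identity $\prod_{i=0}^{p-1}\sigma^i\mathfrak A=\bigl(N_{K_1/K}\mathfrak A\bigr)\mathcal O_{K_1}$, so that on classes $\N=\iota\circ N_{K_1/K}$, where $\iota\colon A_0\hookrightarrow A_1$ is the injection supplied by hypothesis (B) and used to identify $A_0$ with a subgroup of $A_1$. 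For (a) my plan is to prove surjectivity of the arithmetic norm and transport it across this identity. Let $H$ be the Hilbert class field of $K$. Class field theory gives $[\Cl_K:N_{K_1/K}\Cl_{K_1}]=[K_1\cap H:K]$; since $K_1/K$ has prime degree $p$ and is ramified (hypothesis (A) provides a totally ramified prime), $K_1$ cannot lie inside the everywhere-unramified field $H$, forcing $K_1\cap H=K$ and hence surjectivity. Passing to $p$-parts, $N_{K_1/K}\colon A_1\to A_0$ is onto, and then $\operatorname{Image}(\N)=\iota(A_0)=A_0$, which is (a) and also supplies the equality $A_0=\operatorname{Image}(\N)$ needed in (b).

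For (b) it remains to identify the $\sigma$-fixed subgroup. One containment is immediate: classes extended from $K$ are fixed by $\sigma$, so $A_0\subseteq A_1^{\sigma}$. For the reverse I will invoke the ambiguous class number (Chevalley) formula for the cyclic extension $K_1/K$,
\[
\abs{\Cl_{K_1}^{G}}=\frac{h_K\prod_v e_v}{[K_1:K]\,[E_K:E_K\cap N_{K_1/K}K_1^{\times}]}.
\]
Here exactly one prime ramifies and it is totally ramified, so $\prod_v e_v=p=[K_1:K]$ and these cancel; moreover $A_0\ne 0$ forces $K\notin\{\mathbb Q(i),\mathbb Q(\sqrt{-3})\}$, whence $E_K=\{\pm1\}$ and, $p$ being odd, the unit index contributes no power of $p$. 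Taking $p$-parts therefore yields $\abs{A_1^{\sigma}}=\abs{A_0}$, and combined with $A_0\subseteq A_1^{\sigma}$ this gives $A_0=A_1^{\sigma}$. Together with $A_0=\operatorname{Image}(\N)$ from (a), this is (b).

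For (c) the key observation is that $\N$ annihilates $(\sigma-1)A_1$, since $\N(\sigma-1)=\sigma^p-1=0$ in $\mathbb Z_p[\sigma]$. Hence $\N$ descends to a surjection $A_1/(\sigma-1)A_1\twoheadrightarrow A_0$; because the endomorphism $\sigma-1$ of the finite group $A_1$ has cokernel of the same order as its kernel $A_1^{\sigma}$, part (b) gives $\abs{A_1/(\sigma-1)A_1}=\abs{A_1^{\sigma}}=\abs{A_0}$, so this surjection is an isomorphism and $A_1/(\sigma-1)A_1\cong A_0$ is cyclic. The ring $R=\mathbb Z_p[\sigma]=\mathbb Z_p[x]/(x^p-1)$ is local with maximal ideal $\mathfrak m=(p,\sigma-1)$ and residue field $\mathbb F_p$, and $A_1/\mathfrak m A_1$ is a quotient of the cyclic group $A_1/(\sigma-1)A_1$, hence at most one-dimensional over $\mathbb F_p$. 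Nakayama's lemma then yields a single $R$-module generator of $A_1$, giving a surjection $R\twoheadrightarrow A_1$ and the desired presentation $A_1\cong\mathbb Z_p[\sigma]/I$.

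I expect the main obstacle to be the bookkeeping in (b): one must verify that the ramification factor $\prod_v e_v$ and the unit index in Chevalley's formula combine to leave exactly $\abs{A_0}$ after passing to $p$-parts, and this is precisely where the hypotheses ``a single totally ramified prime'' and $A_0\ne 0$ (which rules out the two imaginary quadratic fields with extra roots of unity) are genuinely needed. Everything else is formal once norm surjectivity is in hand, so the conceptual core is the short class field theory argument of (a) together with the factorization of $\N$ through the coinvariants in (c).
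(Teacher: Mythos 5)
Your proposal is correct and follows essentially the same route as the paper: norm surjectivity from total ramification (via class field theory), Chevalley's ambiguous class number formula for the fixed subgroup, and Nakayama's lemma over the local ring $\mathbb Z_p[\sigma]$ once $A_1/(\sigma-1)A_1\simeq A_0$ is known. The only cosmetic difference is in (c), where you count $\abs{A_1/(\sigma-1)A_1}$ via the kernel--cokernel identity for an endomorphism of a finite group, while the paper instead verifies $(\sigma-1)A_1=\ker(\N)$ by comparing orders; the two bookkeeping schemes are equivalent.
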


\begin{proof} 
(a) Since $K_1/K$ is assumed to be totally ramified, the norm map on ideal classes is surjective; see \cite[Theorem~10.1]{W}.

\smallskip

(b) Since only one prime ramifies in $K_1/K$ and since the units of $K$ are $\pm 1$ (except for the trivial cases $\mathbb Q(\sqrt{-1})$ and $\mathbb Q(\sqrt{-3})$), Chevalley's formula (see for example \cite{lemmermeyer2013ambiguous}) says the number of classes fixed by the Galois group $\langle \sigma \rangle$ is
\[
\abs{A_1^{\langle \sigma\rangle}} = \frac{\abs{A_0} p}{p \, [E_{K} : E_{K}\cap \text{Norm}(K_1^{\times})]} = \abs{A_0}.
\]
Here, $E_K$ denotes the unit group of $K$. 
Since $A_0\subseteq A_1^{\langle\sigma\rangle}$, the two groups are equal.

\smallskip

(c) $A_1$ is a finitely generated module over the local ring $\mathbb Z_p[\sigma]$. The maximal ideal of $\mathbb Z_p[\sigma]$ is generated by $p$ and $\sigma-1$.
Part (b) implies that 
\[
\abs{(\sigma - 1)A_1} = \frac{\abs{A_1}}{\abs{A_1^{\langle\sigma\rangle}} }= \frac{\abs{A_1}}{\abs{A_0}}.
\]
On the other hand, part (a) implies that
\[
\frac{\abs{A_1}}{\abs{\ker(\N)}} = \abs{A_0}.
\]
Since $(\sigma - 1)A_1\subseteq \ker(\N)$ and they have the same order, they are equal.
Therefore, the norm yields an isomorphism
\[
A_1/(\sigma - 1)A_1 \simeq A_0 \simeq \mathbb Z/p^m\mathbb Z,
\]
so  $A_1/(p, \sigma - 1)A_1$ is cyclic.
Nakayama's Lemma implies $A_1$ is cyclic as $\mathbb Z_p[\sigma]$-module.
\qedhere
\end{proof}

Next, to find the possibilities for $A_1$, we need to find the ideals $I$ of $\mathbb Z_p[\sigma]$ satisfying the following two conditions:
\begin{enumerate}
\item[(i)] $\N(\mathbb Z_p[\sigma]/I)$ has order $p^m$
\item[(ii)] $\N(\mathbb Z_p[\sigma]/I) = (\mathbb Z_p[\sigma]/I)^{\langle \sigma\rangle}$.
\end{enumerate}
Since $\sigma x \equiv x\mod I$ is equivalent to $(\sigma - 1)x\in I$, the second condition can be restated as
\begin{enumerate}
\item[(ii')] If $x\in\mathbb Z_p[\sigma]$ and $(\sigma-1)x\in I$, then there exists $k\in \mathbb Z_p$ such that $x-k\N\in I$.
\end{enumerate}

\section{The arithmetic of \texorpdfstring{$\mathbb Z_p[\pi]$}{}}
\label{Zzeta}

Let $\zeta$ be a primitive $p$-th root of unity and let $\pi=\zeta-1$. We have
\[
1=\zeta^p=(1+\pi)^p \equiv 1+p\pi + \pi^p \pmod {p\pi^2}.
\]
Therefore, 
\[
\pi^{p-1} = up,
\]
with $u\in \mathbb Z_p[\zeta]^{\times}$ satisfying $u\equiv -1\pmod {\pi}$. 
For $s\ge 0$, we can write
\[
u^s= a_0+a_1\pi + a_2\pi^2+ \cdots + a_{p-2} \pi^{p-2}
\]
with $a_i\in \mathbb Z_p$ for each $i$ and $p\nmid a_0$.
Let  $r$ be a positive integer; write 
\[
r=(p-1)s+t \text{  with } s\ge 0 \text{ and } 0\le t < p-1.
\]
Then 
\begin{align*}
\pi^r&=u^sp^s\pi^t \\
& = p^s \pi^t(a_0+a_1\pi + a_2\pi^2+ \cdots + a_{p-2} \pi^{p-2}) \\
&= a_0 p^s\pi^t+\cdots + p^sa_{p-t-2}\pi^{p-2} + up^{s+1}a_{p-t-1}+ \cdots + up^{s+1}a_{p-2}\pi^{t-1}.
\end{align*}

Observe that we gain an extra factor of $p$ after the powers of $\pi$ wrap around and are expressed in terms of $p$ times a low power of $\pi$.
Therefore, each term starting with $up^{s+1}a_{p-t+1}$ has a factor of $p^{s+1}$. Expressing each $u\pi^i$   for $0\le i \le t-1$
as a polynomial in $\pi$, we obtain
\[
\pi^r = b_0 p^{s+1} + \cdots + b_{t-1} p^{s+1} \pi^{t-1} + b_{t}p^s \pi^{t} + \cdots + b_{p-2} p^s \pi^{p-2}
\]
with $b_i\in \mathbb Z_p$ and $p\nmid b_t$.
We can write this as
\[
\pi^r = O(p^{s+1}) + O(p^{s+1})\pi + \cdots + u_t p^s \pi^t + O(p^s)\pi^{t+1} + \cdots + O(p^s) \pi^{p-2},
\]
where $u_t$ is a $p$-adic unit and where $O(p^s)$ represents an unspecified multiple of $p^s$ in $\mathbb Z_p$.

\subsection*{Change of basis matrix:}
As motivation for later calculations with the group ring $\mathbb Z_p[\sigma]$, we make the following computation.
Of course, 
\[
B=\{1, \pi, \pi^2, \dots, \pi^{p-2}\}
\]
is a $\mathbb Z_p$-basis for $\mathbb Z_p[\pi]$.
Now consider $\pi^r\mathbb Z_p[\pi]$.
Writing the $\mathbb Z_p$-basis $\{\pi^r, \pi^{r+1}, \dots, \pi^{r+p-2}\}$ of $\pi^r\mathbb Z_p[\pi]$ in terms of the basis $B$ 
of $\mathbb Z_p[\pi]$ yields the $(p-1)\times (p-1)$ matrix
\[
\begin{pmatrix}
 O(p^{s+1}) & \cdots & O(p^{s+1})  & u_0 p^{s+1} & \cdots & O(p^{s+2}) \\
\cdots & \cdots & \cdots & \cdots  & \ddots & \cdots \\
\cdots & \cdots & \cdots & \cdots  & \cdots & u_{t-1}p^{s+1} \\
u_t p^s & \cdots & O(p^{s+1}) & O(p^{s+1}) & \cdots & O(p^{s+1}) \\
\cdots & \ddots & \cdots & \cdots & \cdots \\
O(p^s) & \cdots &  u_{p-2} p^s  & O(p^{s+1}) &  \cdots & O(p^{s+1})\\
\end{pmatrix}
\]
where each $u_i$ is a $p$-adic unit.
For example, the first column represents the above expansion of $\pi^r$ as a linear combination of the powers of $\pi$ in the basis $B$, and the last column represents the expansion of $\pi^{r-1}$.

\subsection*{Obtaining the group structure:}
Subtract a multiple of the last row from each row above it to remove the $O(p^{s+1})$ terms from the column above the $u_{p-2}p^s$ in the bottom row.
Then subtract multiples of the penultimate row from the rows above it to remove the $O(p^{s+1})$ terms in the column that is to the left of the previous one.
Continue subtracting multiples of each row from those above it, to obtain a matrix of the form
\[
\begin{pmatrix}
0 & \cdots & 0  & w_1 p^{s+1} & \cdots & 0 \\
\cdots & \cdots & \cdots  & \cdots & \ddots & \cdots \\
\cdots & \cdots & \cdots  & \cdots & \cdots & w_2 p^{s+1} \\
w_3 p^s & \cdots &0  & O(p^{s+1}) & \cdots & O(p^{s+1}) \\
\cdots & \ddots & \cdots  & \cdots & \cdots \\
O(p^s) & \cdots & w_4 p^s  & O(p^{s+1}) &  \cdots & O(p^{s+1})
\end{pmatrix}
\]
where each $w_i$ is a $p$-adic unit.
Now we can use row operations to remove the  $O(p^s)$ and $O(p^{s+1})$ entries to obtain a matrix with exactly one nonzero entry in each row and each column.
We can read off the group structure from this matrix.
More precisely, we have proved the following result.

\begin{proposition}
Let $\zeta$ be a primitive $p$-th root of unity and $\pi= \zeta-1$.
Let $r$ be a positive integer with $ r=(p-1)s+t$ where $s\ge 0 \text{ and } 0\le t < p-1$.
Then
\[
\mathbb Z_p[\zeta]/\pi^r\mathbb Z_p[\zeta]\simeq \large(\mathbb Z/p^{s+1}\mathbb Z)^t \times \left(\mathbb Z/p^s\mathbb Z\right)^{p-1-t}.
\]    
\end{proposition}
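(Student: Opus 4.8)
The plan is to work entirely inside the discrete valuation ring $R=\mathbb{Z}_p[\zeta]=\mathbb{Z}_p[\pi]$, which is the ring of integers of the totally ramified extension $\mathbb{Q}_p(\zeta)/\mathbb{Q}_p$ of degree $p-1$; it has uniformizer $\pi$, residue field $\mathbb{F}_p$, and is free of rank $p-1$ over $\mathbb{Z}_p$ with basis $B=\{1,\pi,\dots,\pi^{p-2}\}$. Write $M=R/\pi^r R$. First I would record that $|M|=p^r$: the $\pi$-adic filtration $R\supseteq \pi R\supseteq\cdots\supseteq \pi^r R$ has successive quotients $\pi^jR/\pi^{j+1}R\cong R/\pi R\cong\mathbb{F}_p$, so there are exactly $r$ factors of size $p$.

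The crux is to compute, for each $0\le i\le p-2$, the additive order of the image $\bar\pi^{\,i}$ of $\pi^i$ in $M$. Here I would use the relation $\pi^{p-1}=up$ established above: since $p^m=u^{-m}\pi^{(p-1)m}$, the element $p^m\pi^i$ equals a unit times $\pi^{(p-1)m+i}$, which lies in $\pi^rR$ exactly when $(p-1)m+i\ge r$. Hence the order of $\bar\pi^{\,i}$ is $p^{k_i}$ with $k_i=\lceil (r-i)/(p-1)\rceil$. Substituting $r=(p-1)s+t$ and splitting on the sign of $t-i$ gives $k_i=s+1$ for $0\le i< t$ (there are $t$ such $i$) and $k_i=s$ for $t\le i\le p-2$ (there are $p-1-t$ such $i$).

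Finally I would assemble these into the group structure by a counting argument rather than an explicit diagonalization. The images $\bar\pi^{\,i}$ generate $M$ as a $\mathbb{Z}_p$-module, so the assignment $e_i\mapsto \bar\pi^{\,i}$ defines a surjection $\bigoplus_{i=0}^{p-2}\mathbb{Z}/p^{k_i}\mathbb{Z}\twoheadrightarrow M$ (well defined because $\bar\pi^{\,i}$ has order dividing $p^{k_i}$). Both sides have order $\prod_i p^{k_i}=p^{(p-1)s+t}=p^r=|M|$, so a surjection of finite groups of equal order must be an isomorphism. Reading off the left-hand side gives $(\mathbb{Z}/p^{s+1}\mathbb{Z})^t\times(\mathbb{Z}/p^s\mathbb{Z})^{p-1-t}$, as claimed.

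The main thing to be careful about is that knowing only the orders of a generating set does not in general determine an abelian group; what makes the argument work is pairing that information with the exact total order $p^r$, which forces the natural surjection from the model group to be injective. This is what lets me bypass the Smith-normal-form reduction of the change-of-basis matrix carried out above, where the real bookkeeping cost is tracking the extra factor of $p$ produced each time the powers of $\pi$ wrap around past $\pi^{p-2}$. The only genuinely delicate points in my route are the ceiling evaluation at the boundary $i=t$ and the degenerate case $s=0$, where the generators of ``order $p^0$'' simply vanish in $M$ and the factor $(\mathbb{Z}/p^s\mathbb{Z})^{p-1-t}$ disappears; both must be checked to reconcile with the exponents in the statement.
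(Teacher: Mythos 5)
Your argument is correct, and it takes a genuinely different route from the one in the paper. You replace the explicit change-of-basis matrix and its reduction to diagonal form with two observations: the order of $\bar\pi^{\,i}$ in $R/\pi^rR$ is $p^{k_i}$ with $k_i=\lceil (r-i)/(p-1)\rceil$ (which follows cleanly from $\pi^{p-1}=up$), and the product $\prod_i p^{k_i}$ equals $p^r=\abs{R/\pi^rR}$, so the natural surjection from $\bigoplus_i \mathbb Z/p^{k_i}\mathbb Z$ is forced to be an isomorphism. Your closing caveat is exactly the right one --- orders of a generating set alone do not pin down a finite abelian group --- and the equality of total orders is precisely what rescues the argument; the boundary case $i=t$ and the degenerate case $s=0$ both check out. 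What your shortcut does not buy is reusability: the paper's matrix computation is presented explicitly as motivation for Section 6, where the analogous quotients $\mathbb Z_p[\sigma]/I$ are not quotients of a discrete valuation ring by a power of a uniformizer, the natural generators' orders need not multiply to $\abs{\mathbb Z_p[\sigma]/I}$, and the Smith-normal-form bookkeeping (tracking the extra factor of $p$ at each wrap-around) is genuinely needed. So your proof is the slicker one for this proposition in isolation, while the paper's proof is the one that scales to the main theorem.
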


\section{Reiner's classification of ideals of \texorpdfstring{$\mathbb Z_p[\sigma]$}{}}
\label{reiner}

We now describe Reiner's classification \cite{R} of ideals of $\mathbb Z_p[\sigma]$.
Consider the diagram
\[
\begin{CD}
\mathbb Z_p[\sigma] @>\epsilon >>  \mathbb Z_p \\
@VV\phi V@VV\!\!\mod p V  \\
\mathbb Z_p[\zeta] @>>\mod \pi > \mathbb F_p
\end{CD}
\]
where $\epsilon: \mathbb Z_p[\sigma]\to \mathbb Z_p$ is the augmentation map $\sum a_i \sigma^i \mapsto \sum a_i$ and $\phi(\sigma)=\zeta$.
The minimal polynomial of $\zeta$ is $x^{p-1}+x^{p-2}+\cdots + x+1$; hence, the kernel of $\phi$ is $\mathbb Z_p \N$, i.e., the multiples of the norm.
The kernel of $\epsilon$ is generated by $\sigma-1$.
It is easy to see that
\[
\mathbb Z_p[\sigma] \simeq \left\{(x,y)\in \mathbb Z_p[\zeta] \times \mathbb Z_p\, \mid \, \phi(x)\text{ mod }\pi =\epsilon(y)\text{ mod } p \text{ in } \mathbb F_p\right\}.
\]
Therefore, $\mathbb Z_p[\sigma]$ is the fiber product of  $\mathbb Z_p[\zeta]$ and $\mathbb Z_p$ over $\mathbb F_p$.
We often write elements of the group ring as ordered pairs
$(x, y)$ satisfying the compatibility condition.

The action of $\mathbb Z_p[\sigma]$ is given by $\sigma(x,y)=(\zeta x, y)$. Therefore, 
\begin{align*}
(\sigma - 1)(x,y) &= (\pi x, 0) \\
\N(x, y) &= (0, py).
\end{align*}

Let $I$ be an ideal of finite index in $\mathbb Z_p[\sigma]$.
Then $\phi(I)$ is an ideal of the discrete valuation ring $\mathbb Z_p[\zeta]$, hence is of the form $\pi^r\mathbb Z_p[\zeta]$ for some integer $r\ge 0$.
Choose $\alpha\in I$ with $\phi(\alpha)=\pi^r$. Let $b=\epsilon(\alpha)\in \mathbb Z_p$.
Then $\alpha$ is represented in the fiber product by $(\pi^r, b)$.
The choice of $\alpha\in I$ is not unique.
If $\phi(\alpha_1)=\pi^r$, then $\alpha_1-\alpha = k\N$ for some $k\in \mathbb Z_p$ such that $k\N\in I$.
Let $p^{m+1}\mathbb Z_p= \epsilon(\mathbb Z_p \N\cap I)$.
In other words, $p^{m}\N$ is the smallest non-zero multiple of $\N$ that lies in $I$, which exists since $I$ has finite index in the group ring.
Since $\epsilon(\alpha) - \epsilon(\alpha_1)\in p^{m+1}\mathbb Z_p$,
the number $b$ is well-defined mod $p^{m+1}$.
It is easy to see that
\[
I = \mathbb Z_p[\sigma] (\pi^r, b) + \mathbb Z_p (0, p^{m+1})
\]
under the identification of $\mathbb Z_p[\sigma]$ given by the fiber product.  
Note that 
\[
\N(b,b)=(0, pb) = \N(\pi^r, b) \in I,
\]
so $p^m \mid  b$ since $p^m$ is the smallest positive integer with $p^m \N\in I$.
This says we may assume 
\[
b\in \{p^{m}j\, |\, 0\le j\le p-1\}.
\]
If $r=0$, then $(1,b)\in I$.
By the fiber product condition, $b\equiv 1\pmod p$, so $(1,b)$ is a unit and $I=\mathbb Z_p[\sigma]$. 

Since the norm $\N\in \mathbb Z_p[\sigma]$ acts as 0 on $\mathbb Z_p[\zeta]$ and as $p$ on $\mathbb Z_p$,  the image of the norm is 
\[
\N\left(\mathbb Z_p[\sigma]/I\right) = (0, p\mathbb Z_p)/(0, p^{m+1}\mathbb Z_p) \simeq \mathbb Z/p^{m}\mathbb Z.
\]
Since  $\N (A_1)=A_0\ne 0$, we cannot have $m=0$ for ideals $I$ as in Proposition~\ref{prop1}(c).
Henceforth, we ignore the cases where $r=0$ or $m=0$.

In summary, Reiner's classification says the following: 
\begin{proposition}
\label{Reiner}
Let $I$ be an ideal of finite index in $\mathbb Z_p[\sigma]$ such that \[
\N(\mathbb Z_p[\sigma]/I)\simeq \mathbb Z/p^m \mathbb Z \ne 0.
\]
Then there exists
\begin{enumerate}
\item[(a)] an integer $r\ge 1$,
\item[(b)] an integer $b\in p^{m}\mathbb Z_p/p^{m+1}\mathbb Z_p$
\end{enumerate}
such that
\[
I = \mathbb Z_p[\sigma] (\pi^r, b) + \mathbb Z_p (0, p^{m+1}).
\]
Moreover,
\[
\abs{ \mathbb Z_p[\sigma]/I} = p^{r+m}.
\]
\end{proposition}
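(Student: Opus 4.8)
The plan is to read off the integers $r$ and $b$ from the fiber product picture already in place, verify that the asserted elements generate $I$, and then obtain the order $p^{r+m}$ from a short exact sequence induced by $\phi$. Throughout I use the identification of $\mathbb{Z}_p[\sigma]$ with the fiber product $\{(x,y)\in\mathbb{Z}_p[\zeta]\times\mathbb{Z}_p : \phi(x)\bmod\pi=\epsilon(y)\bmod p\}$ together with the formulas $\N(x,y)=(0,py)$ and $(\sigma-1)(x,y)=(\pi x,0)$.

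\emph{Extracting $r$ and $b$.} Because $\phi$ is surjective and $\mathbb{Z}_p[\zeta]$ is a DVR with uniformizer $\pi$, the ideal $\phi(I)$ equals $\pi^r\mathbb{Z}_p[\zeta]$ for a unique $r\ge 0$; choosing $\alpha\in I$ with $\phi(\alpha)=\pi^r$ and setting $b=\epsilon(\alpha)$ represents $\alpha$ as $(\pi^r,b)$. The hypothesis $\N(\mathbb{Z}_p[\sigma]/I)\ne 0$ forces $r\ge 1$, since $r=0$ would make $(\pi^r,b)$ a unit and $I=\mathbb{Z}_p[\sigma]$. With $p^{m+1}\mathbb{Z}_p:=\epsilon(\mathbb{Z}_p\N\cap I)$, any two lifts of $\pi^r$ into $I$ differ by an element of $\ker\phi=\mathbb{Z}_p\N$, so $b$ is well defined modulo $p^{m+1}$; and $\N\alpha=(0,pb)=b\N\in I$ forces $p^m\mid b$ by minimality of $p^m$. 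Hence $b\in p^m\mathbb{Z}_p/p^{m+1}\mathbb{Z}_p$.

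\emph{Identifying $I$ and computing the index.} Set $J=\mathbb{Z}_p[\sigma](\pi^r,b)+\mathbb{Z}_p(0,p^{m+1})$. The inclusion $J\subseteq I$ is clear, since $\alpha\in I$ and $(0,p^{m+1})=p^m\N\in I$ by the definition of $m$. For $I\subseteq J$, take $\beta\in I$; since $\phi(\beta)\in\pi^r\mathbb{Z}_p[\zeta]=\phi(\mathbb{Z}_p[\sigma]\alpha)$, I subtract a suitable $\gamma\in\mathbb{Z}_p[\sigma]\alpha$ so that $\phi(\beta-\gamma)=0$, whence $\beta-\gamma\in\ker\phi\cap I=\mathbb{Z}_p\N\cap I=\mathbb{Z}_p(0,p^{m+1})$ and $\beta\in J$. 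For the order, applying $\phi$ produces the short exact sequence
\[
0\longrightarrow \mathbb{Z}_p\N/(\mathbb{Z}_p\N\cap I)\longrightarrow \mathbb{Z}_p[\sigma]/I\longrightarrow \mathbb{Z}_p[\zeta]/\pi^r\mathbb{Z}_p[\zeta]\longrightarrow 0,
\]
whose kernel term is $(I+\mathbb{Z}_p\N)/I$. The right-hand term has order $p^r$ because $\mathbb{Z}_p[\zeta]$ is a DVR with residue field $\mathbb{F}_p$, while the left-hand term is $\mathbb{Z}_p\N/p^m\N\mathbb{Z}_p\cong\mathbb{Z}/p^m\mathbb{Z}$ of order $p^m$; multiplying gives $\abs{\mathbb{Z}_p[\sigma]/I}=p^{r+m}$.

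The one point demanding care is the identity $\ker\phi\cap I=\mathbb{Z}_p(0,p^{m+1})$, which underlies both the decomposition $I=J$ and the index count, together with the compatibility check that the $m$ defined by $\epsilon(\mathbb{Z}_p\N\cap I)=p^{m+1}\mathbb{Z}_p$ is the same $m$ as in the hypothesis $\N(\mathbb{Z}_p[\sigma]/I)\cong\mathbb{Z}/p^m\mathbb{Z}$; this holds since $\N\mathbb{Z}_p[\sigma]=(0,p\mathbb{Z}_p)$, so that $\N(\mathbb{Z}_p[\sigma]/I)=(0,p\mathbb{Z}_p)/(\mathbb{Z}_p\N\cap I)$. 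Everything else is bookkeeping inside the fiber product.
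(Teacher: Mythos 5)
Your proof is correct and follows essentially the same route as the paper: the extraction of $r$ and $b$ from the fiber product, the ruling out of $r=0$, and the divisibility $p^m\mid b$ reproduce the discussion preceding the proposition, while your index count via the exact sequence $0\to\mathbb{Z}_p\N/(\mathbb{Z}_p\N\cap I)\to\mathbb{Z}_p[\sigma]/I\to\mathbb{Z}_p[\zeta]/\pi^r\mathbb{Z}_p[\zeta]\to 0$ is a minor repackaging of the paper's comparison of indices inside $\mathbb{Z}_p[\zeta]\times\mathbb{Z}_p$. You additionally verify the generation statement $I=\mathbb{Z}_p[\sigma](\pi^r,b)+\mathbb{Z}_p(0,p^{m+1})$, which the paper leaves as ``easy to see''; that check is correct.
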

\begin{proof}
All that remains to be proved is the last statement.
Using the fiber product representation of the group ring, we have
\begin{align*}
I &= \mathbb Z_p[\sigma] (\pi^r, b) + \mathbb Z_p (0, p^{m+1})\\
&\subseteq\mathbb Z_p[\sigma] (1, 1) + \mathbb Z_p (0, p)  = \mathbb Z_p[\sigma]\\
&\subset \mathbb Z_p[\sigma] (1, 1) + \mathbb Z_p (0, 1).
\end{align*}
The ideal $I$ is of index $p^{r+m+1}$ in the sum in last line, and the group ring is of index $p$ in this sum, so
the index of $I$ in the group ring is $p^{r+m}$.\end{proof}

To determine which ideals arise from cyclic $A_0$, we need to determine the ideals $I$ such that  the only elements of $\mathbb Z_p[\sigma]/I$ that are fixed by $\sigma$ are the multiples of $\N$.

\begin{proposition}
\label{prop 5.2}
Let $r, m, b$ be as in Proposition \ref{Reiner} and let $I$ be the corresponding ideal.
Then
\[
\left(\mathbb Z_p[\sigma]/I\right)^{\langle \sigma\rangle} = \N\left(\mathbb Z_p[\sigma]/I\right) \Longleftrightarrow b\not\equiv 0\pmod{p^{m+1}}.
\]
\end{proposition}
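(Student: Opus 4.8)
The plan is to reduce the stated equivalence to a single index computation. Write $M=\mathbb Z_p[\sigma]/I$ and set
\[
N=\N(M), \qquad F=M^{\langle\sigma\rangle}.
\]
Since $\sigma\N=\N$, every element of $N$ is fixed by $\sigma$, so $N\subseteq F$, and both are subgroups of the finite group $M$. Consequently $F=N$ if and only if $|F|=|N|$, and the computation preceding Proposition~\ref{Reiner} already gives $|N|=p^m$. Thus it suffices to compute $|F|$ and to verify that $|F|=p^m$ holds exactly when $b\not\equiv 0\pmod{p^{m+1}}$.

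First I would compute $|F|$ as an index. Since $F=\ker\bigl((\sigma-1)\colon M\to M\bigr)$, the first isomorphism theorem gives $|F|=|M|/|(\sigma-1)M|$. Writing $J=(\sigma-1)\mathbb Z_p[\sigma]+I$, one has $(\sigma-1)M=J/I$, whence
\[
|F|=\frac{[\mathbb Z_p[\sigma]:I]}{[J:I]}=[\mathbb Z_p[\sigma]:J].
\]
Because $J\supseteq(\sigma-1)\mathbb Z_p[\sigma]=\ker\epsilon$, the augmentation induces an isomorphism $\mathbb Z_p[\sigma]/J\simeq \mathbb Z_p/\epsilon(J)$, and so $|F|=[\mathbb Z_p:\epsilon(J)]$.

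The heart of the argument is then identifying $\epsilon(J)$. Using the fiber product description, $\epsilon$ reads off the second coordinate, $\epsilon(x,y)=y$, and is surjective; combined with the formulas $(\sigma-1)(x,y)=(\pi x,0)$ and $(u,v)(\pi^r,b)=(u\pi^r,vb)$, I would compute term by term
\[
\epsilon\bigl((\sigma-1)\mathbb Z_p[\sigma]\bigr)=0,\quad
\epsilon\bigl(\mathbb Z_p[\sigma](\pi^r,b)\bigr)=b\,\mathbb Z_p,\quad
\epsilon\bigl(\mathbb Z_p(0,p^{m+1})\bigr)=p^{m+1}\mathbb Z_p,
\]
so that $\epsilon(J)=b\,\mathbb Z_p+p^{m+1}\mathbb Z_p$ and therefore $|F|=p^{\min(v_p(b),\,m+1)}$, where $v_p$ is the $p$-adic valuation. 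By the normalization in Proposition~\ref{Reiner} we have $b\in p^m\mathbb Z_p$, i.e.\ $v_p(b)\ge m$. Hence if $b\not\equiv 0\pmod{p^{m+1}}$ then $v_p(b)=m$, giving $|F|=p^m=|N|$ and thus $F=N$; whereas if $b\equiv 0\pmod{p^{m+1}}$ then $v_p(b)\ge m+1$, giving $|F|=p^{m+1}>|N|$ and thus $F\supsetneq N$. This is the desired equivalence.

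The step I expect to require the most care is the identification $\epsilon\bigl(\mathbb Z_p[\sigma](\pi^r,b)\bigr)=b\,\mathbb Z_p$, which rests on the fact that the second coordinate $v$ of a group-ring element $(u,v)$ ranges over all of $\mathbb Z_p$ (equivalently, that $\epsilon$ is surjective on $\mathbb Z_p[\sigma]$); one must be sure this is not constrained by the fiber product compatibility condition. As a concrete sanity check in the direction $b\not\equiv 0$, the same input explains the equality directly: if $(\sigma-1)(x,y)=(\pi x,0)\in I$, then $b\not\equiv 0\pmod{p^{m+1}}$ forces $x\in\pi^r\mathbb Z_p[\zeta]$, and subtracting a suitable element $(u_0,v_0)(\pi^r,b)\in I$ clears the first coordinate, exhibiting $(x,y)$ modulo $I$ as a norm.
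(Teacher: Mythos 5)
Your proof is correct, and it takes a genuinely different route from the paper's. The paper argues element-by-element in both directions: for the forward implication it exhibits, when $b\equiv 0\pmod{p^{m+1}}$, the explicit witness $(\pi^{r-1},c)$ (with $c=1$ if $r=1$, $c=0$ otherwise), which is fixed mod $I$ but cannot be a multiple of $\N$ mod $I$ because that would force $\pi^{r-1}\in\phi(I)=\pi^r\mathbb Z_p[\zeta]$; for the reverse implication it solves the equations $\pi x=u\pi^r$, $0=vb+kp^{m+1}$ in the fiber product to write any fixed class explicitly as $\N(z,z)$ plus an element of $I$. You instead reduce the whole equivalence to one index computation: since $\N(M)\subseteq M^{\langle\sigma\rangle}$ always and $\abs{\N(M)}=p^m$ is already known from the discussion before Proposition~\ref{Reiner}, equality holds iff $\abs{M^{\langle\sigma\rangle}}=p^m$, and you compute $\abs{M^{\langle\sigma\rangle}}=[\mathbb Z_p:\epsilon((\sigma-1)\mathbb Z_p[\sigma]+I)]=p^{\min(v_p(b),m+1)}$. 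All the steps check out: $\epsilon$ is indeed surjective (the fiber product condition constrains the pair $(u,v)$, not the range of $v$), so $\epsilon(\mathbb Z_p[\sigma](\pi^r,b))=b\mathbb Z_p$ as you claim. Your approach is shorter, avoids the case split on $r$, and gives slightly more information (it shows $[M^{\langle\sigma\rangle}:\N(M)]=p$ exactly when $b\equiv 0\pmod{p^{m+1}}$); the paper's approach produces explicit fixed elements and an explicit norm representative, which is more concrete but not needed for the later structure computations.
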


\begin{proof}
($\Rightarrow$)
Suppose $b\equiv 0\pmod{p^{m+1}}$.
Set $c=1$ if $r=1$ and $c=0$ if $r>1$.
Then  $(\pi^{r-1}, c)$ satisfies the fiber product condition and we check that $(\sigma-1)(\pi^{r-1}, c) = (\pi^r, 0)$ is an element of $I$. 
If  
\[
(\pi^{r-1},c)- \N(u, v) =(\pi^{r-1}, c-pv)\in I
\]
for some $(u, v)\in \mathbb Z_p[\sigma]$, then 
\[
\pi^{r-1} \in \phi(I) = \pi^r\mathbb Z_p[\zeta],\]
a contradiction.
Therefore, $(\pi^{r-1}, c)$ is not equivalent to a multiple of $\N\pmod{I}$, so we have an element fixed by $\sigma \pmod{I}$ that is not congruent mod $I$ to a multiple of $\N$.

\smallskip

($\Leftarrow$)
Now suppose $b\not\equiv 0\pmod{p^{m+1}}$.
Suppose $(\sigma-1)(x,y)\in I$.
We want to show $(x,y)$ is congruent to a multiple of $\N \pmod{I}$.
Write
\[
(\pi x, 0) =(\sigma-1)(x,y) =  (u,v)(\pi^r, b) + k(0, p^{m+1})
\]
with $(u,v)\in \mathbb Z_p[\sigma]$ and $k\in \mathbb Z_p$. This says 
\[
\pi x= u\pi^r,  \qquad 0=vb+kp^{m+1}.
\]
Since $b\not\equiv 0\pmod{p^{m+1}}$, we must have $p\mid v$.
Since $(u, v)$ lives in the fiber product, $\pi\mid u$.
Therefore, $\pi^r\mid x$. Since $(x,y)$ lives in the fiber product, $p\mid y$. 
Choose $\ell\in \mathbb Z_p$ such that $(u/\pi, \ell)\in \mathbb Z_p[\sigma]$, and choose $z\in \mathbb Z_p$ such that $y=zp+\ell b+kp^{m+1}$.
This is possible because $p^m \mid b$ and $m\ge 1$.
Then
\[
(x,y)= \N(z,z)+(u/\pi ,\ell)(\pi^r, b) + k(0, p^{m+1}).
\]
Therefore, $(x,y) \equiv \N(z,z) \mod{I}$, 
so the fixed elements  are congruent to multiples of $\N$.

Conversely, anything congruent mod $I$ to a multiple of $\N$ is fixed 
mod $I$ by $\sigma$ since $(\sigma -1)\N=0$.
Therefore, the fixed elements mod $I$ are exactly the multiples of $\N$ mod $I$, as claimed.
\end{proof}

Recall that we are assuming $A_0$ is cyclic of order $p^m$ with $m\ge 1$.
Proposition~\ref{prop 5.2} says that for $I$ satisfying conditions (i) and (ii) (in the end of Section~\ref{prelims}) we must have $b\not\equiv 0\pmod{p^{m+1}}$. 
Therefore, the ideals that yield possibilities for $A_1$ have the form 
\[
I = \mathbb Z_p[\sigma](\pi^r, jp^m) + \mathbb Z_p(0, p^{m+1}), \text{ with } r\ge 1 \text{ and } 1\le j\le p-1.
\]

It will be convenient to let $T=\sigma-1$.  
Let $\alpha=T^r+jp^{m-1}\N$.
In the fiber product representation, when $r\ge 1$, we have $\phi(\alpha)= \pi^r$  and $\epsilon(\alpha)=jp^m$, so $\alpha$
corresponds to $(\pi^r, jp^m)$.
Similarly,  $p^m \N$ corresponds to $(0,p^{m+1})$. Since $j^{-1}\N\alpha= p^m\N$, the ideal $I$ is generated by $\alpha$ over $\mathbb Z_p[\sigma]$.

\section{Group structures}\label{groups}

We want to imitate the calculation of the structure of $\mathbb Z_p[\pi]/(\pi^r)$ for $\mathbb Z_p[\sigma]$.
In other words, we wish to find the structure of $\mathbb Z_p[\sigma]/I$, where $I$ is an ideal as in Section \ref{Reiner}.
The main content of this section is summarized in Table~\ref{table 1} at the end of the section.

We continue to let $T=\sigma-1$.  
Then
\[
1=\sigma^p = (T+1)^p = T^p+pT^{p-1}+\cdots +pT+1,
\]
so 
\[
T^p=\tilde{u}pT
\]
with $\tilde{u}\in \mathbb Z_p[T]$ satisfying $\tilde{u}(0)=-1$. 

Every element  $\beta\in \mathbb Z_p[\sigma]$ can be written as a $\mathbb Z_p$-linear combination
\[
\beta=b_0+b_1T+\cdots +b_{p-1}T^{p-1}.
\]
Note that $b_0\ne 0\iff \beta \N\ne 0$; in particular $b_0=0$ whenever $\beta$ is a multiple of $T$.
Thus, there are no constant terms for many elements we consider.

\subsection*{Change of basis matrix}
As shown at the end of Section \ref{Reiner}, $\alpha=T^r+jp^{m-1}\N$ generates $I$ over $\mathbb Z_p[\sigma]$.
We need to find $\mathbb Z_p$-generators for $I$. As before, write
\[
r=(p-1)s+t \text{ with } 0\le t < p-1.
\]
Since $T\N=0$, we have
\[
T\alpha=T^{r+1}=\tilde{u}^sp^sT^{t+1}, \dots, T^{p-1-t}\alpha = \tilde{u}^s p^s T^{p-1}.
\]
We continue to multiply by powers of $T$, but now the powers wrap around back to the beginning to yield
\[
T^{p-t}\alpha =\tilde{u}^{s+1} p^{s+1} T, \dots, T^{p-1} \alpha = \tilde{u}^{s+1} p^{s+1} T^{t}.
\]
We conclude that $I$ is generated over $\mathbb Z_p$ by
\[
\begin{cases}
\tilde{u}^{s+1} p^{s+1} T, \dots, \tilde{u}^{s+1} p^{s+1} T^t, \tilde{u}^{s} p^{s} T^{t+1}, \dots, \tilde{u}^s p^s T^{p-1},  \tilde{u}^sp^sT^t+jp^{m-1}\N \\
 \textrm{ if } t\neq 0\\
 \\
\tilde{u}^{s} p^{s} T, \dots, \tilde{u}^s p^s T^{p-1},   \tilde{u}^{s-1}p^{s-1}T^{p-1}+jp^{m-1}\ N \\
\textrm{ if } t= 0.
\end{cases}
\]
Since 
\[
\tilde{u}^{s+1}\equiv (-1)^{s+1} \pmod{T},
\]
we can write $\tilde{u}^{s+1}=(-1)^{s+1}+c_1T+c_2T^2+\cdots + c_{p-1}T^{p-1}$ with $c_i\in \mathbb Z_p$. 
When $0\le \ell\le t$, we have
\begin{align*}
\tilde{u}^{s+1}T^{\ell} = (-1)^{s+1}T^{\ell}+c_1T^{\ell+1}+\cdots + c_{p-1-\ell}T^{p-1}+c_{p-\ell}T^p+\cdots + c_{p-1}T^{\ell+p-1}.
\end{align*}
Note that 
\[
c_{p-\ell}T^p + \cdots + c_{p-1}T^{\ell+p-1}=c_{p-\ell}\tilde{u}pT + \cdots + c_{p-1}\tilde{u}pT^{\ell},
\]
and each summand $\tilde{u}T^{j}$ with $j>0$ can be written as a $\mathbb Z_p$-linear combination of $T, T^2, \dots, T^{p-1}$.
So
\[
\tilde{u}^{s+1}T^{\ell} =(-1)^{s+1}T^{\ell}+c_1T^{\ell+1}+\cdots + c_{p-1-\ell}T^{p-1}+p\, h(T),
\]
where $h(T)$ is a $\mathbb Z_p$-linear combination of $T, \dots, T^{p-1}$.
Therefore,
\begin{align*}
\tilde{u}^{s+1}&p^{s+1}T^{\ell} \\
= &\big((-1)^{s+1} p^{s+1}+O(p^{s+2})\big) T^{\ell} + O(p^{s+1})T^{\ell +1} + \cdots + O(p^{s+1})T^{p-1}\\
 &  + O(p^{s+2})T + \cdots + O(p^{s+2})T^{\ell-1},
\end{align*}
where $O(p^b)$  denotes a $p$-adic integer with valuation at least $b$.   
Similar expansions hold for $\tilde{u}^s p^s T^{\ell}$ for $t\le \ell\le p-1$.
Since 
\begin{align*}
\N &=1 + (T+1) + \cdots + (T+1)^{p-1}=\frac{(T+1)^{p}-1}{T}\\
& = T^{p-1} + O(p)T^{p-2}+ \cdots + O(p)T + p,
\end{align*}
we have
\[
jp^{m-1}\N = jp^{m-1}T^{p-1}+ O(p^m)T^{p-2}+\cdots + jp^m.
\]

To find the structure of $\mathbb Z_p[\sigma]/I$, we perform the usual row and column operations to find the  structure of a free $\mathbb Z_p$-module modulo some relations.
We express the generators of $I$ in terms of the  $\mathbb Z_p$-basis for $\mathbb Z_p[T]$ consisting of powers of $T$, and put the results in a $p\times p$ matrix:
$$
\begin{pmatrix}  0 & \cdots & 0 &  0 & \cdots & 0 & jp^m\\
 w_1p^{s+1} & \cdots & O(p^{s+2})  &  O(p^{s+1}) & \cdots & O(p^{s+1}) & O(p^{\min(s+1, m)}) \\
 \cdots & \ddots &  \cdots &  \cdots & \cdots & \cdots & \cdots \\
 O(p^{s+1}) & \cdots & w_{t}p^{s+1} &  O(p^{s+1}) & \cdots &  O(p^{s+1}) & (-1)^sp^s\! +\! O(p^{\min(s+1, m)}) \\
O(p^{s+1}) & \cdots & O(p^{s+1}) &  w_{t+1}p^{s} & \cdots &  O(p^{s+1}) & O(p^{\min(s, m)}) \\
 \cdots & \cdots &  \cdots &  \cdots & \ddots & \cdots & \cdots \\
 O(p^{s+1}) & \cdots & O(p^{s+1}) & O(p^{s}) & \cdots &  w_{p-1}p^s & jp^{m-1} + O(p^s)
\end{pmatrix},
$$
when $t\ne 0$, and a slightly modified matrix  when $t=0$. Here, each $w_i$ is a $p$-adic unit. 

These columns are obtained as follows.
The $ij$ entry is the coefficient of $T^i$ in the expansion of a generator of $I$ as a polynomial in $T$ of degree at most $p-1$.
In particular, the first column corresponds to 
\[
T^{p-t}\alpha  = \tilde{u}^{s+1}p^{s+1}T = w_1 p^{s+1}T + O(p^{s+1})T^2 + \cdots + O(p^{s+1})T^{p-1},
\]
whereas the last column corresponds to $\tilde{u}^{s}p^{s}T^{t}+jp^{m-1}\N$. 
When $t=0$, the last column arises from $\tilde{u}^{s-1}p^{s-1}T^{p-1} + jp^{m-1}\N$. 

\begin{Example}
Let $p=5$ and $\alpha=T^6+5\N$; i.e. $m=2, r=6, s=1, t=2$.
The relation $(T+1)^5-1=0$ becomes 
\[
T^5=-5T^4-10T^3-10T^2-5T=5T(-1-2T-2T^2-T^3),
\]
so $\tilde{u}=-1-2T-2T^2-T^3$.
We have
\begin{align*}
\N&=1+(T+1)+(T+1)^2+(T+1)^3+(T+1)^4 \\
&= T^4 + 5T^3+10T^2+10T+5.
\end{align*}
The generators of $I$ are
\begin{align*}
T^7&=-75T-125T^2-105T^3-35T^4\\
T^8 &= 175T+275T^2+225T^3+70T^4\\
T^9 &= -350T-525T^2-425T^3-125T^4\\
T^{10} &= 625T + 900 T^2 +725 T^3 + 200T^4\\
T^6+5\N &= 25+75T+95T^2+65T^3+20T^4
\end{align*}
We put the coefficients of these expressions into a matrix.
Since $T^9=5^2\tilde{u}^{2}T$, we use $T^9$ for the first column.
Since $T^{10}=5^2\tilde{u}^{2}T^2$,
we use $T^{10}$ for the second column.
Similarly, we use $T^7=5\tilde{u}T^3$ and $T^8=\tilde{u}T^4$ for the third and fourth columns.
The last column comes from $T^6+5\N$.
Here is the matrix:
\begin{gather*}
\begin{pmatrix} 0&0&0&0&25\\ -350 & 625 & -75 & 175 & 75\\ -525 & 900  & -125 & 275 & 95\\
-425 & 725 & -105 & 225 & 65\\ -125 & 200 & -35 & 70 & 20
\end{pmatrix} \\
=
\begin{pmatrix} 0 & 0 & 0 & 0 & 25 \\
25w_1 & O(5^3) & O(5^2) & O(5^2) & O(5^2) \\
O(5^2) & 25w_2 & O(5^2) & O(5^2) & -5+O(5^2) \\
O(5^2) & O(5^2) & 5w_3 & O(5^2) & O(5) \\
O(5^2) & O(5^2) & O(5) & 5w_4 & 5+O(5)
\end{pmatrix},
\end{gather*}
where $w_1, w_2, w_3, w_4$ are 5-adic units. Of course, the $5+O(5)$ in the lower right corner could be written as $O(5)$.
The Smith normal form of this matrix is $\text{diag}(5^3\cdot 511, 5^2, 5, 5, 5)$, so
\[
\mathbb Z_5[\sigma]/I\simeq \left( \mathbb Z/5^3\mathbb Z\right)\times\left (\mathbb Z/5^2\mathbb Z\right)\times\left( \mathbb Z/5\mathbb Z\right)\times\left( \mathbb Z/5\mathbb Z\right)\times\left (\mathbb Z/5\mathbb Z\right).
\]
\end{Example}

We now return to the general situation.
In a sense, the $(p-1)\times (p-1)$ matrix obtained by deleting the first row and the last column acts like a lower triangular matrix. 
This motivates the matrix operations we use.
Subtract multiples of the second row from the rows below it to remove each $O(p^{s+1})$ in the first column.
Since this adds $O(p^{s+2})$ to $w_2 p^{s+1}$, for example, we still have a unit times $p^{s+1}$ in the third row, second column. Use this element to remove the entries above and below it.
Continue in this way to obtain 
\[
M=\begin{pmatrix}  0 & \cdots & 0 &  0 & \cdots & 0 & jp^m\\
 w_1p^{s+1} & \cdots & 0  &  0 & \cdots & 0 & O(p^{\min(s+1, m)}) \\
 \cdots & \ddots &  \cdots &  \cdots & \cdots & \cdots & \cdots \\
 0 & \cdots & w_{t}'p^{s+1} & 0& \cdots & 0 & O(p^{\min(s, m)}) \\
0 & \cdots & 0 &  w_{t+1}'p^{s} & \cdots &  0 & O(p^{\min(s, m)}) \\
 \cdots & \cdots &  \cdots &  \cdots & \ddots & \cdots & \cdots \\
0 & \cdots &0 & 0 & \cdots &  w_{p-1}'p^s & jp^{m-1}\! +\! O(p^{\min(s, m)})
\end{pmatrix}
\]
with units $w_i'$.
Note, for example, when the $(t+2)$nd row is used to remove elements above $w_{t+1} p^s$, this row is multiplied at least by $p$.
Therefore, the
entry $O(p^{\min(s,m)})$ in this row contributes $O(p^{1+\min(s,m)})$ to the entries above it in the last column and therefore they still can be represented as $O(p^{\min(s+1, m)})$. 

\subsection{\texorpdfstring{Case $m\le s$ and $t\ne 0$}{}}
The top row can be used to remove other entries from the last column, yielding the matrix
\[
\begin{pmatrix}  0 & \cdots & 0 &  0 & \cdots & 0 & jp^m\\
 w_1p^{s+1} & \cdots & 0  &  0 & \cdots & 0 & 0 \\
 \cdots & \ddots &  \cdots &  \cdots & \cdots & \cdots & \cdots \\
 0 & \cdots & w_{t}'p^{s+1} & 0& \cdots & 0 &0 \\
0 & \cdots & 0 &  w_{t+1}'p^{s} & \cdots &  0 & 0 \\
 \cdots & \cdots &  \cdots &  \cdots & \ddots & \cdots & \cdots \\
0 & \cdots &0 & 0 & \cdots &  w_{p-1}'p^s & jp^{m-1} 
\end{pmatrix}.
\]
Subtract $p$ times the last row from the top row to remove $jp^m$ and this puts $-w_{p-1}'p^{s+1}$ at the top of the next to last column.
Finally, subtract a multiple of the last  column from the next to last column to obtain
\[
\begin{pmatrix}  0 & \cdots & 0 &  0 & \cdots & -w_{p-1}''p^{s+1} &0\\
 w_1p^{s+1} & \cdots & 0  &  0 & \cdots & 0 & 0 \\
 \cdots & \ddots &  \cdots &  \cdots & \cdots & \cdots & \cdots \\
 0 & \cdots & w_{t}'p^{s+1} & 0& \cdots & 0 &0 \\
0 & \cdots & 0 &  w_{t+1}'p^{s} & \cdots &  0 & 0 \\
 \cdots & \cdots &  \cdots &  \cdots & \ddots & \cdots & \cdots \\
0 & \cdots &0 & 0 & \cdots &  0 & jp^{m-1} 
\end{pmatrix}.
\]
Since there is one non-zero entry in each row and each column, we can permute the rows to put these entries on the diagonal.
It follows that
\[
\mathbb Z_p[\sigma]/I \simeq  \left(\mathbb Z/p^{m-1}\mathbb Z\right) \times \left(\mathbb Z/p^{s+1}\mathbb Z\right)^{t+1}\times 
\left(\mathbb Z/p^s\mathbb Z\right)^{p-t-2}.
\]

\subsection{\texorpdfstring{Case $m> s$ and $t\ne 0$}{}}
In the reduction to the matrix $M$, we can be more precise and obtain
\[
\left(0, \dots, w_t' p^{s+1}, 0, \dots, (-1)^sp^s + O(p^{s+1})\right)
\]
as one of the middle rows.
The columns of $M$ can be used to remove all the terms in the last column except for the $jp^m$ in the upper right corner and the $(-1)^sp^s$ in the $(t+1)$st row.
We now have
\[
\begin{pmatrix}  0 & \cdots & 0 &  0 & \cdots & 0 & jp^m\\
 w_1p^{s+1} & \cdots & 0  &  0 & \cdots & 0 & 0  \\
 \cdots & \ddots &  \cdots &  \cdots & \cdots & \cdots & \cdots \\
 0 & \cdots & w_{t}'p^{s+1} & 0& \cdots & 0 & w''p^s \\
0 & \cdots & 0 &  w_{t+1}'p^{s} & \cdots &  0 & 0 \\
 \cdots & \cdots &  \cdots &  \cdots & \ddots & \cdots & \cdots \\
0 & \cdots &0 & 0 & \cdots &  w_{p-1'}p^s & 0
\end{pmatrix}
\]
for some unit $w''$.

Subtract a multiple of the last column from the $t$-th column and then subtract a multiple of the $(t+1)$st row from the top row to obtain
\[
\begin{pmatrix}  0 & \cdots &  wp^{m+1} &0 &  \cdots & 0 & 0 \\
 w_1p^{s+1} & \cdots & 0  &  0 & \cdots & 0 & 0  \\
 \cdots & \ddots &  \cdots &  \cdots & \cdots & \cdots & \cdots \\
 0 & \cdots & 0 & 0& \cdots & 0 & w'' p^s \\
0 & \cdots & 0 &  w_{t+1}'p^{s} & \cdots &  0 & 0 \\
 \cdots & \cdots &  \cdots &  \cdots & \ddots & \cdots & \cdots \\
0 & \cdots &0 & 0 & \cdots &  w'_{p-1}p^s & 0
\end{pmatrix}.
\]
We conclude that
\[
\mathbb Z_p[\sigma]/I \simeq  \left(\mathbb Z/p^{m+1}\mathbb Z\right) \times \left(\mathbb Z/p^{s+1}\mathbb Z\right)^{t-1}\times 
\left(\mathbb Z/p^s\mathbb Z\right)^{p-t}.
\]

\subsection{\texorpdfstring{Case $t= 0$}{}}
We start with the matrix
\[
\begin{pmatrix}  0 & \cdots & 0  & \cdots & 0 & jp^m\\
 w_1p^{s} & \cdots & O(p^{s+1})   & \cdots & O(p^{s+1}) & O(p^{\min(s, m)}) \\
 \cdots & \ddots &  \cdots &  \cdots & \cdots & \cdots \\
 O(p^{s}) & \cdots & w_{i}p^{s} &  \cdots &  O(p^{s+1}) & O(p^{\min(s, m)}) \\
O(p^{s}) & \cdots & O(p^{s}) &  \cdots &  O(p^{s+1}) & O(p^{\min(s, m)}) \\
 \cdots & \cdots &  \cdots & \ddots & \cdots & \cdots \\
 O(p^{s}) & \cdots & O(p^{s}) & \cdots &  w_{p-1}p^s & jp^{m-1}\! +\! (-1)^{s-1}p^{s-1}+O(p^s)
\end{pmatrix}.
\]
Here's how to obtain the last column. We have
\[
T^{(p-1)s} + jp^{m-1}\N = \tilde{u}^{s-1}p^{s-1}T^{p-1} + jp^{m-1}\left(T^{p-1}+O(p)T^{p-2}+\cdots +p\right).
\]
As before, 
\begin{align*}
\tilde{u}^{s-1}T^{p-1} &= T^{p-1}\left((-1)^{s-1} + c_1'T+c_2'T^2+\cdots + c_{p-1}' T^{p-1}\right) \\
&= (-1)^{s-1}T^{p-1}+ O(p)T+O(p)T^2+\cdots + O(p)T^{p-1}
\end{align*}
since $T^{p+j}=O(p)T^{j+1}$ when $j\ge 0$.
Therefore,
\begin{align*}
&T^{(p-1)s} + jp^{m-1}\N \\
& \quad = \left((-1)^{s-1}p^{s-1}+jp^{m-1}+O(p^s)\right) T^{p-1} + O(p^s+ p^m)T^{p-2} + \cdots + jp^{m}.
\end{align*}

Row operations, as before, reduce this to
\[
\begin{pmatrix}  0 & \cdots & 0 &  \cdots & 0 & jp^m\\
 w_1p^{s} & \cdots & 0&   \cdots & 0 & O(p^{\min(s, m)}) \\
 \cdots & \ddots & \cdots & \cdots & \cdots & \cdots \\
 0 & \cdots & w_{i}p^{s} & \cdots &  0& O(p^{\min(s, m)}) \\ \cdots & \cdots &  \cdots & \ddots & \cdots & \cdots \\
 0& \cdots & 0 & \cdots &  w_{p-1}p^s & jp^{m-1}\! +\! (-1)^{s-1}p^{s-1}\! +\! O(p^{\min(s,m)})
\end{pmatrix},
\]

\subsubsection{}
If $m<s$, proceed in exactly the same way as  when $t\ne 0$ to obtain
\[
\mathbb Z_p[\sigma]/I \simeq  \left(\mathbb Z/p^{m-1}\mathbb Z\right) \times \left(\mathbb Z/p^{s+1}\mathbb Z\right)\times 
\left(\mathbb Z/p^s\mathbb Z\right)^{p-2},
\]
which is the answer we obtained for $t\ne 0$, but with $t$ set to $0$.

\subsubsection{}
If $m>s$, we know that the lower right entry has valuation $s-1$, so we can proceed in exactly the same way as  when $t\ne 0$ and obtain
\[
\mathbb Z_p[\sigma]/I \simeq  \left(\mathbb Z/p^{m+1}\mathbb Z\right) \times \left(\mathbb Z/p^s\mathbb Z\right)^{p-2}\times \left(\mathbb Z/p^{s-1}\mathbb Z\right).
\]

\subsubsection{}
Finally, suppose $t=0$ and $m=s$.
If $j\not\equiv (-1)^m$, then the lower right corner has $p$-adic valuation $m-1$.
Use all but the last two columns to remove the middle entries in the last column.
The top and bottom rows of the last two columns
have the form
\[
\begin{pmatrix} 0 & jp^m \\ w_{p-1} p^m & w_p p^{m-1}\end{pmatrix}
\]
and are independent of the rest of the matrix.
Subtract $p$ times a multiple of the bottom row from the top row to change the upper right corner to 0, then subtract $p$ times a multiple of the second column from the first column to change the lower left entry to 0, yielding a matrix of the form
\[
\begin{pmatrix} wp^{m+1} & 0\\  0 & w_p p^{m-1}\end{pmatrix}.
\]
We conclude that
\[
\mathbb Z_p[\sigma]/I \simeq  \left(\mathbb Z/p^{m+1}\mathbb Z\right) \times \left(\mathbb Z/p^{m-1}\mathbb Z\right) \times \left(\mathbb Z/p^m\mathbb Z\right)^{p-2}.
\]

If $j\equiv (-1)^m$, then the lower right corner has $p$-adic valuation at least $m$. The top row removes this entry and we obtain
\[
\mathbb Z_p[\sigma]/I \simeq   \left(\mathbb Z/p^m\mathbb Z\right)^{p}.
\]

\subsection{Summary}
We summarize our results in Table~\ref{table 1}.
Note that the cyclic group $\mathbb Z/p^{m+1}\mathbb Z$ corresponds to $s=0$, $t=1$, hence to the ideals $I=(\pi, jp^m)$. 

\begin{center}
\begin{table}[h!]
\begin{tabular}{|c|c|}
\hline
$m < s$ & $\phantom{\bigg |}\left(\mathbb Z/p^{m-1}\mathbb Z\right) \times \left(\mathbb Z/p^{s+1}\mathbb Z\right)^{t+1}\times 
\left(\mathbb Z/p^s\mathbb Z\right)^{p-t-2}$\\
\hline
$m>s$, $t\ne 0$ & $\phantom{\bigg |}\left(\mathbb Z/p^{m+1}\mathbb Z\right) \times \left(\mathbb Z/p^{s+1}\mathbb Z\right)^{t-1}\times 
\left(\mathbb Z/p^s\mathbb Z\right)^{p-t}$ \\
\hline
$m>s$, $t=0$ & $\phantom{\bigg |}\left(\mathbb Z/p^{m+1}\mathbb Z\right) \times 
\left(\mathbb Z/p^s\mathbb Z\right)^{p-2}\times \left(\mathbb Z/p^{s-1}\mathbb Z\right)$ \\
\hline
$m =s$, $t\ne 0$ & $\phantom{\bigg |}\left(\mathbb Z/p^{m-1}\mathbb Z\right) \times \left(\mathbb Z/p^{m+1}\mathbb Z\right)^{t+1}\times 
\left(\mathbb Z/p^m\mathbb Z\right)^{p-t-2}$\\
\hline
$m=s$, $t=0$, $j\not \equiv (-1)^m$ & $\phantom{\bigg |}\left(\mathbb Z/p^{m+1}\mathbb Z\right) \times \left(\mathbb Z/p^{m-1}\mathbb Z\right) \times \left(\mathbb Z/p^m\mathbb Z\right)^{p-2}$\\
\hline
$m=s$, $t=0$, $j\equiv (-1)^m$ & $ \phantom{\bigg |} \left(\mathbb Z/p^m\mathbb Z\right)^{p}$\\
\hline
\end{tabular}
\medskip
\caption{The possible group structures for $A_1$ when $A_0\simeq \mathbb Z/p^m\mathbb Z$.
Recall that $I$ is generated by $T^r+jp^{m-1}\N$ where $r=(p-1)s+t$ and $0\le t\le p-2$.}
\label{table 1}
\end{table}
\end{center}

The first line of Table~\ref{table 1} gives the groups in Theorem~\ref{main theorem} for $m<s$ and $1\le a=t+1\le p-1$.
The fourth line gives the groups with $m=s$ and $2\le a\le p-1$.
The fifth line gives the case $m=s$ and $a=1$.
The second line of the table gives the groups with $m>s$ and $0\le b=t-1\le p-3$.
If we allow $t=p-1$ in the list of groups given in the second line, then we obtain the groups in the third line, but with $s$ in place of $s+1$.
Since $s<m$, this means the groups in the second line with $t=p-1$, $s+1=m$ are not included among the possibilities.
So the second and third lines  include all cases of $m>s$, $0\le b\le p-2$ in the notation of the theorem, except for the case $m=s+1$, $b=p-2$.
The last line of the table gives the first group listed in Theorem~\ref{main theorem}.
This completes the proof of Theorem~\ref{main theorem}. \hfill $\square$

\section{Compatibility of heuristics}\label{compatibility}

Let $p$ be an odd prime and let $K$ be an imaginary quadratic field in which $p$ is non-split.
Let $K_n$ be the $n$-th layer of the cyclotomic $\mathbb Z_p$-extension $K_{\cyc}$ of $K$.
Let $p^{e_n}$ be the exact power of $p$ dividing the class number of $K_n$. There are integers $\lambda\ge 0$ and $\nu$ such that $e_n=\lambda n+\nu$ for all sufficiently large $n$. 

\cite[Prop.~13.26]{W} asserts that the natural map $A_0\to A_1$ is injective when $K_1/K$ is the start of the cyclotomic $\mathbb Z_p$-extension of $K$.
So, the assumptions of Theorem \ref{main theorem} are satisfied. 
Therefore, if $A_0$ is cyclic, we have a list of possibilities for $A_1$.

\begin{proposition}
\label{7.1}
The following statements are equivalent:
\begin{enumerate}
\item[(a)]
$A_0$ and $A_1$ are cyclic\footnote{Recall that we write a group is cyclic to mean that the group is non-trivial and cyclic.
}. 
\item[(b)]
$\frac{\abs{A_1}}{\abs{A_0}}=p$.
\item[(c)] $\lambda=1$.
\end{enumerate}
\end{proposition}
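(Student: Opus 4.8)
The plan is to prove the equivalence in the shape (a) $\Leftrightarrow$ (b), proved by hand with the module-theoretic tools of Sections~\ref{prelims}--\ref{groups}, and then to bring in the Iwasawa invariant via (a) $\Leftrightarrow$ (c), which is Gold's criterion \cite{G}. Together these give (b) $\Leftrightarrow$ (a) $\Leftrightarrow$ (c).

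The technical heart is (b) $\Rightarrow$ (a), which I would prove inside $A_1$ viewed as a module over the local ring $\mathbb Z_p[\sigma]$. Set $T=\sigma-1$. By Proposition~\ref{prop1}(b), $A_0=A_1^{\langle\sigma\rangle}=\ker(T)$, so $A_1/\ker(T)\cong TA_1$ and hypothesis (b) gives $\abs{TA_1}=\abs{A_1}/\abs{A_0}=p$. Thus $TA_1$ is cyclic of order $p$, and $T^2A_1=T(TA_1)$ is a quotient of it, hence either $0$ or all of $TA_1$; if $T^2A_1=TA_1$ then $\mathfrak m\,(TA_1)=TA_1$ for the maximal ideal $\mathfrak m=(p,\sigma-1)$, and Nakayama forces $TA_1=0$, a contradiction. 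Hence $T^2A_1=0$. I would then use this to compute the norm: since $\sigma^i=(1+T)^i$ acts as $1+iT$ on $A_1$, we get $\N=\sum_{i=0}^{p-1}(1+iT)=p\bigl(1+\tfrac{p-1}{2}T\bigr)$ on $A_1$, and $1+\tfrac{p-1}{2}T$ is invertible because $T$ is nilpotent there. Therefore $\N A_1=pA_1$, while Proposition~\ref{prop1}(a)--(b) identifies $\N A_1$ with $A_0$; so $A_0=pA_1$. Consequently $A_1/pA_1=A_1/A_0$ has order $p$, so $A_1/pA_1\cong\mathbb Z/p\mathbb Z$ and Nakayama shows $A_1$ is cyclic; then $A_0=pA_1$ is cyclic too, giving (a).

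For (a) $\Rightarrow$ (b) I would invoke Theorem~\ref{main theorem}: among its possibilities for $A_1$ when $A_0\cong\mathbb Z/p^m\mathbb Z$, the only cyclic group is $\mathbb Z/p^{m+1}\mathbb Z$ (the case $s=0,\ b=0$), whence $\abs{A_1}/\abs{A_0}=p$. Alternatively this follows directly: if $A_1\cong\mathbb Z/p^N\mathbb Z$ then $T$ acts as multiplication by some $c$; one checks $A_0\ne A_1$ (otherwise $\N=p$ gives $A_0=pA_1=A_1$ and Nakayama kills $A_1$), so $v_p(c)<N$ and $A_0=\ker(T)\cong\mathbb Z/p^{v_p(c)}\mathbb Z$ forces $v_p(c)=m$; then the relation $(1+c)^p=1$ in $\mathbb Z/p^N\mathbb Z$ has $v_p\bigl((1+c)^p-1\bigr)=m+1$, forcing $N=m+1$.

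Finally I would connect (a)/(b) to (c). This is exactly Gold's theorem \cite{G}: in the present setting ($p$ non-split, one totally ramified prime) $\lambda=1$ if and only if $A_0$ and $A_1$ are nontrivial and cyclic, which is (a). One can also see the link through the Iwasawa module $X=\varprojlim A_n$: cyclicity of $A_0$ forces $X$ to be a cyclic $\Lambda$-module $\Lambda/(g)$ with $g$ distinguished of degree $\lambda$ and $v_p(g(0))=m$, and a resultant computation of $\abs{A_1}=\abs{\Lambda/(g,\omega_1)}$ shows $\abs{A_1}/\abs{A_0}=p$ precisely when $\deg g=1$. The main obstacle is the reverse implication (c) $\Rightarrow$ (a): recovering the cyclicity of $A_0$ from the purely asymptotic datum $\lambda=1$ is not formal, since a rank-one $\Lambda$-module need not have cyclic coinvariants $X/TX$. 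What rescues it is the arithmetic input that $X$ has no nonzero finite $\Lambda$-submodule in this setting, so that $X\cong\Lambda/(T-a)$ is genuinely cyclic; this is the content of Gold's result that we cite.
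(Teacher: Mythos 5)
Your proposal is correct, but it routes the equivalences differently from the paper. The paper closes the cycle (a) $\Rightarrow$ (b) $\Rightarrow$ (c) $\Rightarrow$ (a): for (a) $\Rightarrow$ (b) it observes that $\sigma$ must act on a cyclic $A_1$ as $1+kp^{m_1-1}$ so that $\N$ is $p$ times a unit (essentially your ``alternative'' argument for that direction); for (b) $\Rightarrow$ (c) it cites Gold's Corollary~3; and for (c) $\Rightarrow$ (a) it uses exactly the input you identify --- $X$ has no finite $\Lambda$-submodules, so $\lambda=1$ forces $X$, hence its quotients $A_0$ and $A_1$, to be cyclic $\mathbb Z_p$-modules, with nontriviality from Nakayama. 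The paper therefore never needs a direct proof of (b) $\Rightarrow$ (a); your argument for that implication (showing $T^2A_1=0$ via Nakayama applied to $TA_1$, computing $\N=p\bigl(1+\tfrac{p-1}{2}T\bigr)$ so that $A_0=\N A_1=pA_1$, and concluding cyclicity from $|A_1/pA_1|=p$) is new relative to the paper and is the genuinely different piece; it has the merit of making (a) $\Leftrightarrow$ (b) a purely local statement about $K_1/K$ under hypotheses (A) and (B), independent of Iwasawa theory, at the cost of being longer than the paper's cycle. Two small points to tidy: Proposition~\ref{prop1} is stated under the hypothesis that $A_0$ is cyclic, which you cannot assume while proving (b) $\Rightarrow$ (a), so you should note that the proofs of parts (a) and (b) of that proposition do not use cyclicity; and you should add the one-line observation that (b) forces $A_0\ne 0$ (if $A_0=A_1^{\langle\sigma\rangle}=0$ then $A_1=0$, since a $p$-group acting on a nontrivial finite $p$-group has nontrivial fixed points), so that ``cyclic'' holds in the paper's nontrivial sense.
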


\begin{proof}
(a) $\Longrightarrow$ (b).
Since the composition $0\ne A_0\hookrightarrow A_1 \twoheadrightarrow A_0$ is multiplication by $p$, we must have $|A_1|>|A_0|$, so $A_1$ is cyclic of order $m_1\ge 2$.
Since $\sigma^p=1$ on $A_1$, we must have $\sigma$ act by multiplication by $1+kp^{m_1-1}$ for some $k$, so 
\begin{equation*}
\begin{split}
\N &=1+\sigma+\sigma^2+\cdots + \sigma^{p-1}\\
&\equiv 1+(1+kp^{m_1-1}) + (1+2kp^{m_1-1}) + \cdots + (1+(p-1)kp^{m_1-1}) \\
&\equiv p \pmod{p^{m_1}}.
\end{split}
\end{equation*}
Therefore, the norm acts on $A_1$ as $p$ times a $p$-adic unit.
Since $A_1$ is cyclic, the kernel of the norm has order $p$.
The fact that the norm is a surjection implies part (b).

(b) $\Longrightarrow$ (c). A result of Gold \cite[Cor.~3]{G} implies that $\lambda=1$.

(c) $\Longrightarrow$ (a).
Let $X = X(K_{\cyc})=\varprojlim A_n$ be the Iwasawa module.
Since $X$ has no finite $\Lambda$-submodules  \cite[Prop.~13.28]{W}, if $\lambda=1$ then $X$ is a cyclic
$\mathbb Z_p$-module.
Both $A_1$ and $A_0$ are quotients of $X$ and must be cyclic.
If $A_0$ is trivial, then Nakayama's Lemma implies that $X$ is trivial; see \cite[Prop.~13.22]{W}.
This is impossible since $\lambda=1$.
Therefore, $A_0$ and hence $A_1$ are nontrivial.
\qedhere
\end{proof}

The Cohen--Lenstra--Martinet heuristics predict that 
\begin{equation}
\label{CL-cyclic}
\tag{CLM: cyclic}
\operatorname{Prob}(A_0 \textrm{ is cyclic}) = p^{-1}(1-p^{-1})^{-2} \prod_{j=1}^{\infty} (1-p^{-j}).
\end{equation}
The Ellenberg--Jain--Venkatesh heuristics predict that 
\begin{equation}
\label{EJV-lambda=1}
\tag{EJV: $\lambda=1$}
\operatorname{Prob}(\lambda =1) = p^{-1} \prod_{j=2}^{\infty} (1-p^{-j}).
\end{equation}

In view of Proposition~\ref{7.1} we know that
\[
\lambda=1 \Longleftrightarrow  \text{ there exists } m\ge 1 \text{ such that } A_0\simeq\mathbb Z/p^m\mathbb Z \text{ and } A_1\simeq \mathbb Z/p^{m+1}\mathbb Z.
\]
Therefore, the conditional probability that $\lambda = 1$ given that $A_0$ is cyclic equals the conditional probability that $A_1$ is cyclic given that $A_0$ is cyclic. 
Combining the two heuristics yields
\begin{align*}
\text{Prob}(\lambda = 1 \, | \, A_0 \text{ is cyclic}) &=  \frac{\text{Prob}(\lambda = 1 \textrm{ and } A_0 \text{ is cyclic})}{ \text{Prob}(A_0 \text{ is cyclic)}} \\
& = \frac{\text{Prob}(\lambda = 1)}{ \text{Prob}(A_0 \text{ is cyclic)}}  = \frac{p-1}{p}.
\end{align*}

In what follows we show that if we weight the possibilities for $A_1$ by the inverse of the sizes of their automorphism groups as $\mathbb Z_p[\sigma]$-modules,
we obtain the same fraction.
This shows a compatibility between \eqref{CL-cyclic} heuristics and \eqref{EJV-lambda=1} heuristics.

The automorphism group of $\mathbb Z_p[\sigma]/I$ as a $\mathbb Z_p[\sigma]$-module 
is  $\left(\mathbb Z_p[\sigma]/I\right)^{\times}$. 

\begin{lemma}\label{autsize}
If  $\mathbb Z_p[\sigma]/I$ has order $p^{m+r}$, then the order of its
automorphism group is $(p-1) p^{m+r-1}$. 
\end{lemma}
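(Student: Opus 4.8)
The plan is to identify the module automorphism group with the unit group of a finite local ring, and then to count units by a standard local-ring argument. The ring $R := \mathbb{Z}_p[\sigma]/I$ is a cyclic module over itself, generated by the class of $1$ (and $A_1\simeq R$ is in fact cyclic over $\mathbb{Z}_p[\sigma]$ by Proposition~\ref{prop1}(c)), so any $\mathbb{Z}_p[\sigma]$-linear endomorphism is determined by the image of this generator and is therefore multiplication by some $\bar u\in R$; such a map is bijective precisely when $\bar u$ is invertible. This is exactly the assertion, recorded just above the lemma, that $\Aut(\mathbb{Z}_p[\sigma]/I)=\left(\mathbb{Z}_p[\sigma]/I\right)^{\times}$. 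Thus the problem reduces to counting the units of $R$.

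The key structural point is that $R$ is a finite local ring with residue field $\mathbb{F}_p$. Here I would use that $\mathbb{Z}_p[\sigma]$ is itself local with maximal ideal $\mathfrak{M}=(p,\sigma-1)$ and residue field $\mathbb{F}_p$ — a fact already invoked in the proof of Proposition~\ref{prop1}(c). This holds because every maximal ideal of the finite $\mathbb{Z}_p$-algebra $\mathbb{Z}_p[\sigma]$ contracts to $(p)$ in $\mathbb{Z}_p$ (by integrality, since $\mathbb{Z}_p$ is not a field), so it contains $p$; and $\mathbb{Z}_p[\sigma]/p\simeq \mathbb{F}_p[\sigma]/\bigl((\sigma-1)^p\bigr)$ is local with residue field $\mathbb{F}_p$. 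Passing to the nontrivial quotient $R$ (nontrivial since $|R|=p^{m+r}\ge p^{2}$), the ring $R$ is again local, with maximal ideal $\mathfrak{m}=\mathfrak{M}/I$ and residue field $R/\mathfrak{m}\simeq\mathbb{Z}_p[\sigma]/\mathfrak{M}\simeq\mathbb{F}_p$.

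Finally I would count the units. In any finite local ring an element is a unit if and only if it lies outside the maximal ideal, so $|R^{\times}|=|R|-|\mathfrak{m}|$. Since $|R/\mathfrak{m}|=p$, we have $|\mathfrak{m}|=|R|/p=p^{m+r-1}$, and hence
\[
|R^{\times}| = p^{m+r}-p^{m+r-1} = (p-1)\,p^{m+r-1},
\]
as claimed. I expect no genuine obstacle: the only step needing care is making the locality of $R$ and the identification of its residue field with $\mathbb{F}_p$ explicit. Once $\Aut=R^{\times}$ and the residue field are in hand, the count is immediate and, notably, depends only on the order $p^{m+r}$ of $R$ and not on which of the specific groups in Table~\ref{table 1} the module $A_1$ happens to be.
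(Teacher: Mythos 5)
Your proof is correct and follows essentially the same route as the paper: identify $\Aut(\mathbb Z_p[\sigma]/I)$ with the unit group, use that the quotient is a finite local ring with maximal ideal $(p,\sigma-1)/I$ and residue field $\mathbb F_p$, and count the complement of the maximal ideal. The only cosmetic difference is that you justify locality via $\mathbb Z_p[\sigma]/p\simeq\mathbb F_p[\sigma]/\bigl((\sigma-1)^p\bigr)$, while the paper observes directly that elements of $\mathfrak m/I$ have $p$-th powers in $p\,\mathbb Z_p[\sigma]/I$ and hence are non-units.
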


\begin{proof}
Let $\mathfrak{m}$ be the maximal ideal of $\mathbb Z_p[\sigma]$ generated by $p$ and $\sigma - 1$. 
Define 
\[
E= \mathfrak{m}/I\subset \mathbb Z_p[\sigma]/I.
\]
Each element of $E$, when raised to the $p$-th power, lies in $p\mathbb Z_p[\sigma]/I$ and therefore cannot be invertible mod $I$.
The complement of $\mathfrak{m}$ in the group ring is the set of invertible elements.
Therefore, the complement of $E$ is $\left(\mathbb Z_p[\sigma]/I\right)^{\times}$.
Observe that $\mathbb Z_p[\sigma]/\mathfrak{m}$ has order $p$.
Therefore, $\abs{E}= p^{m+r-1}$, and its complement has order $p^{m+r}-p^{m+r-1}$.
\end{proof}

For each $r$ there are $p-1$ choices for $b=jp^{m-1}$.
Therefore, the sum over the possibilities for $A_1$, given that $A_0$ is cyclic of order $p^m$, is
\begin{equation}
\label{used later}
\sum_{A_0 \text{ cyclic }p^m} \frac{1}{\abs{\Aut(A_1)}} =  \sum_{ r=1} ^{\infty}  \frac{p-1}{(p-1)p^{r+m-1}} = \frac{1}{(p-1)p^{m-1}}.
\end{equation}
There are $p-1$ ideals that yield $A_1$ cyclic of order $p^{m+1}$.
By Proposition~\ref{7.1}(a), these are all the possible cyclic $A_1$, given $A_0 \simeq \mathbb{Z}/p^m\mathbb{Z}$.  
The automorphism groups have order $(p-1)p^{m}$, so these $p-1$ groups have total weight $(p-1)/(p-1)p^m$.
Therefore, we obtain the heuristic prediction 

\begin{align*}
\text{Prob}(A_1 \text{ is cyclic } | & \, A_0\text{ is cyclic of order } p^m) \\
&= \frac{\text{Prob}(A_1 \text{ is cyclic } \cap A_0\text{ is cyclic of order } p^m)}{\text{Prob}(A_0\text{ is cyclic of order } p^m) } \\
&= \frac{(p-1)/(p-1)p^{m}}{1/(p-1)p^{m-1}} \\
&= \frac{p-1}{p},
\end{align*}
which agrees with the previous estimate. 

This is also evidence that the possible structures that we found for $A_1$ all occur and occur with the expected frequencies; compare with Section~\ref{data}.

\section{An alternative heuristic}\label{alternative}

In this section, we present an alternative to the Ellenberg--Jain--Venkatesh (EJV) heuristics for the cyclotomic $\lambda$-invariant.
We suspect that the EJV is the correct heuristic since it is more structural, but it is unclear whether computations  can conclusively show that one is more likely than the other.
If the theorem on the possible structures for $A_1$ could be extended to the case where $A_0$ has rank 2, then it would be possible to distinguish which heuristic prediction for the $\lambda$-invariant is compatible with the Cohen--Lenstra--Martinet heuristic. 

Let $\eta=\displaystyle\prod_{j=1}^{\infty} (1-p^{-j})$.
The general Ellenberg--Jain--Venkatesh prediction is the following:
\begin{equation}
\label{EJV-gen}
\tag{EJV}
\operatorname{Prob}(\lambda=r) = p^{-r}\eta\prod_{j=1}^r (1-p^{-j})^{-1}.
\end{equation}
The Cohen--Lenstra--Martinet heuristics say that
\begin{equation}
\label{CL-gen}
\tag{CLM}
\operatorname{Prob}(\operatorname{rank} = r) = p^{-r^2} \eta \prod_{j=1}^r (1-p^{-j})^{-2}.
\end{equation}
Since $X$ has no finite $\Lambda$-submodules, we know that $\lambda \ge \operatorname{rank}$; see \cite[Remark after Cor.~5.6]{Gre99}.
Here, by `rank' we mean the $p$-rank of the class group of $K$.

Consider the Iwasawa power series $c_0+c_1T+c_2T^2+\cdots$ attached to $K_{\cyc}/K$.
If the $p$-rank of the class group of $K_0$ is $k$, we know that the coefficients $c_0, \ldots, c_{k-1}$ must all be divisible by $p$. If $\lambda=r$, the coefficients $c_k$ to $c_{r-1}$ are divisible by $p$ and $p\nmid c_r$.
Suppose each $c_i$ with $i\ge k$ is divisible by $p$ with probability $1/p$.
Then the probability that $c_k, \dots, c_{r-1}$ are divisible by $p$ and
$c_r$ is not divisible by $p$ is
\[
\left(\frac{1}{p}\right)^{r-k}\frac{p-1}{p}.
\]
Putting this together with all ranks up to $r$, we find the probability that $\lambda=r$ is 
\begin{equation}
\label{new heuristics}
\tag{new}
\operatorname{Prob}(\lambda = r) = 
\sum_{k=1}^r \text{Prob}(\text{rank}=k)\left(\frac{1}{p}\right)^{r-k}\frac{p-1}{p}.
\end{equation}
When $r=1$, the above heuristics give 
\[
\operatorname{Prob}(\lambda = 1) = \frac{p^{-1}\eta (1-p^{-1})^{-2}(p-1)}{p} = p^{-1}\eta (1-p^{-1})^{-1}.
\]
We see that in this case \eqref{new heuristics} agrees with the \eqref{EJV-lambda=1} heuristic.

In the tables below, we compare how \eqref{new heuristics} compares with \eqref{EJV-gen} and with computational data when $p=3$.
In the following table we compute the values of \eqref{EJV-gen} for some small values of $r$:

\medskip

\begin{tabular}{|c|c|c|c|c|c|c|}
\hline
$r$ & 1 & 2 & 3 & 4 & 5 & 6 \\
\hline
Fraction &&&&&&\\ with $\lambda=r$ & 0.28006 & .10502 & 0.03635  & 0.01227 & 0.00411 & 0.00137\\
\hline
\end{tabular}

\bigskip

For the new heuristic, we have the following predictions:

\medskip

\begin{tabular}{|c|c|c|c|c|c|c|}
\hline
$r$ & 1 & 2 & 3 & 4 & 5 & 6 \\
\hline
Fraction &&&&&&\\ with $\lambda=r$ & 0.28006 & 0.10648 & 0.03555  & 0.01185 & 0.00395 & 0.00132\\
\hline
\end{tabular}

\bigskip

The computed data (taken from \cite{EJV}):

\medskip

\begin{tabular}{|c|c|c|c|c|c|c|}
\hline
$r$ & 1 & 2 & 3 & 4 & 5 & 6 \\
\hline
Fraction &&&&&&\\ with $\lambda=r$ & 0.2680\phantom{;} & 0.0936 \phantom{;}& 0.0322\phantom{;.}  &0.0109\phantom{.} &  0.0035\phantom{;.} & 0.0012\phantom{;.}\\
\hline
\end{tabular}

\section{Data}
\label{data}

To show the compatibility of heuristics of Section \ref{compatibility}, we assumed that the frequency of occurrence of a possibility for $A_1$ is inversely proportional to the size of its automorphism group as a $\mathbb Z_p[\sigma]$-module. 
In this section, we present some data to justify this assumption. For $p=3$ and $p=5$, we looked at the fields $\mathbb Q(\sqrt{-d})$ in which $p$ does not split and the $p$-part of the class group is cyclic of order $p^m$. For $p=3$, we considered $m=1$ and $m=2$.
For $p=5$, we considered only $m=1$ because the computation of $A_1$ takes much longer.
For each case we tabulated how often $A_1$ has one of the possibilities given in Theorem~\ref{main theorem}.
The results are given in the following tables.
We use the notation $p^3\times p\times p$, for example, to denote the group $(\mathbb Z/p^3\mathbb Z)\times (\mathbb Z/p\mathbb Z) \times (\mathbb Z/p\mathbb Z)$.

\subsection*{Calculations for the predicted value in the tables below}
Let us consider the case $m=2$ and $A_1\simeq 3^3\times 3\times 3$.
By Lemma~\ref{autsize}, the order of the automorphism group of $3^3\times 3\times 3$ is $2\cdot 3^4$.
There are two values of $j$ that give an ideal that yields this possibility for $A_1$.
Therefore, the predicted frequency of this group is 
\[
\frac{2/\abs{\Aut(3^3\times 3\times 3)}}{ \displaystyle\sum_{ \abs{A_0}=3^2} \frac{1}{\abs{\Aut(A_1)}} }= \frac{2/(2\cdot 3^4)}{1/(2\cdot 3^{2-1})}= \frac{2}{27}\approx 0.0741,
\]
where the denominator was evaluated in \eqref{used later} in Section \ref{compatibility}.
This gives one of the predicted values in Table~\ref{table 3}.  

Now consider $A_0$ cyclic of order 5 and $\abs{A_1}=5^5$.
There are two possibilities for $A_1$, namely, $25\times 5\times 5\times 5$ and $5\times 5\times 5\times 5\times 5$.
Both have automorphism groups of size $4\cdot 5^4$.
However, the first group occurs for 3 values of $j$ and the second group occurs only for  $j\equiv (-1)^m\equiv -1$ mod $5$.
So the predicted frequency of $25\times 5\times 5\times 5$ is
\[
\frac{3/\abs{\Aut(5^2\times 5\times 5\times 5)}}{ \displaystyle\sum_{ \abs{A_0}=5} \frac{1}{\abs{\Aut(A_1)}} }= \frac{3/(4\cdot 5^4)}{1/(4\cdot 5^{1-1})}= \frac{3}{625}= 0.0048,
\]
while the predicted frequency of $5\times 5\times 5\times 5\times 5$ is
\[
\frac{1/\abs{\Aut(5^2\times 5\times 5\times 5)}}{ \displaystyle\sum_{ \abs{A_0}=5} \frac{1}{\abs{\Aut(A_1)}} }= \frac{1/(4\cdot 5^4)}{1/(4\cdot 5^{1-1})}= \frac{1}{625}= 0.0016.
\]
These are two of the predicted values in Table~\ref{table 4}.

\subsection*{Tables}

No groups of higher order occurred in the range of the calculations.
This was expected because these groups had a very small chance of occurring without extensive calculations.
The group $25\times 25\times 25\times 5$  did not occur in the range listed in the table.
However, it occurs, for example, when $d=-513092$.

\begin{table}[h!]
\begin{tabular}{|c|c|c|c|c|c|c|c|}
\hline
 & Number of $d$ & 9 & $9\times 3$ & $3\times 3\times 3$ & $9\times 9$ & $27\times 9$ & $27\times 27$ \\
\hline
$3\nmid d$ & 18315 & 0.6669 & 0.1118 & 0.1104 & 0.0728 & 0.0267 & 0.0079 \\
$3\mid d$ & 12096 & 0.6685 & 0.1132 & 0.1122 & 0.0703 & 0.0227 & 0.0091 \\
Predicted  &    & 0.6667 & 0.1111 & 0.1111 & 0.0741 & 0.0247 & 0.0082 \\
\hline
\end{tabular}


\begin{tabular}{|c|c|c|c|c|c|}
\hline
$81\times 27$ & $81\times 81$ & $3^5\times 3^4$ & $3^5\times 3^5$ & $ 3^6\times 3^5$\\
\hline
 0.0023 & 0.0008 & 0.0003 & 0.0001 & 0.0001 \\
 0.0027 & 0.0010 & 0.0003 & 0.0000 & 0.0000 \\
 0.0027 & 0.0009 & 0.0003 & 0.0001 & 0.0000 \\
\hline
\end{tabular}

\medskip
 
\caption{$A_0$ is cyclic of order 3. Distribution of possible $A_1$ for fundamental discriminants of the form $-1-3j$ for $10^6 \le j \le 10^6+2\times 10^5$ (the line $3\nmid d$) and of the form $-3j$ for $10^6 \le j \le 10^6+2\times 10^5$ (the line $3\mid d$).}
\end{table}

\begin{table}[h!]
\begin{tabular}{|c|c|c|c|c|c|c|c|}
\hline
 & Number of $d$ & 27 & $27\times 3$ & $27\times 3\times 3$ & $9\times 9\times 9$ & $27\times 9\times 3$  \\
\hline
$3\nmid d$ & 6207 & 0.6742 & 0.2178 & 0.0723 & 0.0130 & 0.0098 \\
$3\mid d$ & 4181 & 0.6683 & 0.2148 & 0.0737 & 0.0148 & 0.0134 \\
Predicted  &    & 0.6667 & 0.2222 & 0.0741 & 0.0123 & 0.0123  \\
\hline
\end{tabular}


\begin{tabular}{|c|c|c|c|c|c|}
\hline
$27\times 27\times 3$ & $81\times 27\times 3$ & $81\times 81\times 3$ & $3^5\times 3^4\times 3$ & $3^5\times 3^5\times 3$ & $ 3^6\times 3^5\times 3$\\
\hline
 0.0079 & 0.0027 & 0.0013 & 0.0006 & 0.0002 & 0.0000\\
 0.0110  & 0.0024 & 0.0010 & 0.0002 & 0.0002 & 0.0002 \\
 0.0082 & 0.0027 & 0.0009 & 0.0003 & 0.0001 & 0.0000 \\
\hline
\end{tabular}

\medskip

\caption{$A_0$ is cyclic of order 9. Distribution of possible $A_1$ for fundamental discriminants of the form $-1-3j$ for $10^6 \le j \le 10^6+2\times 10^5$ (the line $3\nmid d$) and of the form $-3j$ for $10^6 \le j \le 10^6+2\times 10^5$ (the line $3\mid d$).}
\label{table 3}
\end{table}


\begin{table}[h!]
\begin{tabular}{|c|c|c|c|c|c|c|c|}
\hline
& Number of $d$ & $25$ & $25\times 5$ & $25\times 5\times 5$ & $25\times 5\times 5\times 5$  \\
\hline
$-2-5k$ & 588 &  0.8078 & 0.1582 & 0.0272  & 0.0051 \\
$-3-5k$ & 561 & 0.7843 & 0.1765 & 0.0196 & 0.0143\\
$-5k$ & 482 & 0.8050 & 0.1515 & 0.0353 & 0.0083\\
Predicted &  & 0.8000 & 0.1600 & 0.0320 & 0.0048\\
\hline
\end{tabular}
\end{table}

\begin{table}[h!]
\begin{tabular}{|c|c|c|c|c|}
\hline
$ 5\times 5\times 5\times 5\times 5$ & $25\times 25\times 5\times 5$ & $25\times 25\times 25\times 5$ & $25\times 25\times 25\times 25$ \\
\hline
0.0000 & 0.0017 & 0.0000 & 0.0000 \\
0.0018 & 0.0018 & 0.0000 & 0.0018\\
0.0000 & 0.0000 & 0.0000 & 0.0000\\
0.0016 & 0.0013 & 0.0003 & 0.0001\\
\hline
\end{tabular}

\medskip

\caption{$A_0$ is cyclic of order 5. Distribution of possible $A_1$ for fundamental discriminants of the form $-2-5k$, $-3-5k$, and $-5k$ for $10^6 \le k < 10^6+10^4$.}
\label{table 4}
\end{table}


\section{Comparing cyclotomic results with the anti-cyclotomic setting}
\label{comparison sec}

In \cite{KW23}, we studied the Iwasawa invariants in the anti-cyclotomic $\Z_p$-extension $K_{\ac}$ of an imaginary quadratic field $K$ where $p$ is non-split by studying the class groups of the base field $K$ and of the layers $K_1$ (and, when possible, $K_2$) in this anti-cyclotomic tower.
In this section we highlight some of the key differences.

In the anticyclotomic case, the map $A_0\to A_1$ is often non-injective.
When this happens, Theorem \ref{main theorem} does not apply.
In fact, a common occurrence for the anticyclotomic $\mathbb Z_3$-extensions was $A_0\simeq \mathbb Z/3\mathbb Z$ with $A_1\simeq (\mathbb Z/3\mathbb  Z)^2$, something  that cannot occur for cyclotomic $\mathbb Z_3$-extensions of imaginary quadratic fields where $3$ is non-split.
This phenomenon can happen only if the map $A_0\to A_1$ is non-injective or $3$ splits.

The Iwasawa invariants in the cyclotomic and the anti-cyclotomic $\Z_p$-extensions$K_{\cyc}$ and $K_{\ac}$ of the imaginary quadratic field $K$ behave differently and this is due to two main reasons:
First, the Iwasawa module $X(K_{\cyc})$ has no non-trivial finite $\Lambda$-submodules, whereas there are lots of examples where $X(K_{\ac})$ has finite $\Lambda$-submodules.
Second, the $p$-Hilbert class field of $K$ (denoted by $H(K)$) is always disjoint from $K_{\cyc}$, but this need not be true for $K_{\ac}$.

In \cite[Theorem~6.1]{KW23} we proved the following result: Let $K$ be an imaginary quadratic field and let $p$ be an odd prime that does not split in $K$. Suppose that $A_0$ is non-trivial.
If $H(K)$ is disjoint from $K_{\ac}$, then the $p$-part of the class group for the $n$-th layer of the anti-cyclotomic tower is \textit{non-cyclic} for all $n \ge 1$. This is in contrast to the cyclotomic case, where cyclic $A_1$ is quite common.

On the other hand, S.~Fujii \cite{Fuj13}  proved that when $H(K) \subset K_{\ac}$, the invariants $\lambda_{\ac}$ and $\mu_{\ac}$ are $0$.
In fact, it was proved that $A_n=0$ for sufficiently large $n$, so the size of $A_n$ decreases from $\abs{A_0}$ as $n$ increases, in contrast to the cyclotomic case, where $|A_n|$ strictly increases as $n$ increases.

\section{Function fields}\label{ff}

Suppose $K=K_0$ is a function field over a finite field and $K_{\infty}/K$ is a $\mathbb Z_p$-extension.
If $p$ is not the characteristic of $K$, then the only choice is obtained by a constant field extension.
However, if $p$ is the characteristic of $K$, there is a wide choice of such extensions; see \cite{GK}.
As proved in \cite{MW}, the map $A_n\to A_{n+1}$ is injective, which is one of the key ingredients needed in the characteristic 0 situation, especially to prove that $\lambda=1$, $\mu=0$ is equivalent to $e_1-e_0=1$. 

The Cohen--Lenstra--Martinet heuristics appear to be  more approachable in the function field case, and progress has been made when $p$ is not the characteristic; see, for example, \cite{A}.
The case where $p$ is the characteristic of the field is more difficult (see, \cite{CEZ}), but it would be useful to put the considerations of the present section into a  theoretical framework.

\subsection{Data}
We computed one family of examples.
Let $K=\mathbb F_3(X)$ and $K_1=\mathbb F_3(T)$, where $T^3-T+\frac{1}{X}=0$.
Then $K_1/K$ is the start of a (actually, many) $\mathbb Z_3$-extension
in which only the prime at $0$ ramifies and it is totally ramified.
Consider quadratic fields of the form $K(\sqrt{h(X)})$, where $h(X)$ is a square-free polynomial of odd degree such that $h(0)=0$.
These fields are analogous to imaginary quadratic fields in which $0$ is non-split, and $K_1(\sqrt{h(X)})$ is the first layer of the $\mathbb Z_3$-extension lifted to the quadratic field.
We let $h(X)$ run through monic polynomials in $\mathbb F_3[X]$ of degree 7, removing duplicate fields caused by reversing the polynomial (via $X^8h(1/X)$).
We then computed the class number of each $K(\sqrt{h(X)})$.
If 3 divided this class number, we computed the class number of $K_1(\sqrt{h(X)})$.
From Theorem~\ref{main theorem}, this class number determines the structure of $A_1$, except when $\abs{A_1}=3^{3m}$, where there are two choices.

We computed for a total of 200 polynomials $h(X)$.
Of these, 73 had $3\mid h_0$, which is lower than the Cohen--Lenstra prediction of $88$, which is probably another example of the slow convergence of these heuristics.
Of these 73 fields, 57 had $e_0=1$ and 16 had $e_0=2$.
Since $e_0=2$ does not distinguish between cyclic and non-cyclic, we restrict to $e_0=1$.
Of these 57 fields, 44 have $e_1=2$, which is equivalent to $\lambda=1$. 
This is somewhat higher than the predicted $(2/3)57=38$.
Again, this could be due to the slow convergence of the heuristics, or there could be another phenomenon present.

The data for these 57 fields is given in Table~\ref{table 5}.

\begin{table}[h!]
\begin{tabular}{|c|c|c|c|c|c|c|c|}
\hline
& $9$ & $9\times 3$ or $3\times 3\times 3$ & $9\times 9$ & $27\times 9$ & $27\times 27$  \\
\hline
Computed  & 0.77 &  0.16 & 0.04 & 0.00  & 0.04 \\
Predicted   & 0.67 & 0.22 & 0.07 & 0.02 & 0.01\\
\hline
\end{tabular}
\medskip
\caption{Distribution of 3-parts of $A_1$ for the 57 polynomials $h(X)$ of degree 7 where $A_0$ is cyclic of order 3. }
\label{table 5}
\end{table}
\medskip

\bibliographystyle{amsalpha}
\bibliography{references}

\end{document}